\documentclass[12pt]{article}

\usepackage{amsmath, epsfig, cite}
\usepackage{amssymb}
\usepackage{amsfonts}
\usepackage{latexsym}
\usepackage{float}
\usepackage{color}
\usepackage{mathrsfs}

\newtheorem{thm}{Theorem}[section]

\newtheorem{cor}[thm]{Corollary}

\newtheorem{lem}[thm]{Lemma}

\newcommand{\pf}{\noindent{\it Proof} }

\numberwithin{equation}{section}

\newcommand{\qed}{{\hfill$\square$}\medskip}

\begin{document}
	\begin{center}
		{\large\bf  Two Families of $q$-Supercongruences from Watson's Transformation  }
	\end{center}
	\vskip 2mm \centerline{Wei-Wei Qi}
	
	\begin{center}
		{\footnotesize MOE-LCSM, School of Mathematics and Statistics, Hunan Normal University, Hunan 410081, P.R. China\\[5pt]
			{\tt wwqi2022@foxmail.com} \\[10pt]
		}
	\end{center}
	
	\vskip 0.7cm \noindent{\bf Abstract.} In this paper, we establish two families of $q$-supercongruences  modulo the third and fourth powers of a cyclotomic polynomial by  employing Watson's ${}_{8}\phi_7$ transformation, the creative microscoping  method introduced by Guo and Zudilin, and the Chinese remainder theorem for coprime polynomials.

	\vskip 3mm \noindent {\it Keywords}: Cyclotomic Polynomials, $q$-Congruence, Watson's Transformation, Creative Microscoping.
	\vskip 2mm
	\noindent{\it MR Subject Classifications}: 33D15, 11A07, 11B65	
	
	\section{Introduction} 
For a positive integer $r$, and $a_i$, $b_i$ $\in$ $\mathbb{C}$ with $b_i \notin \{\cdots, -3,-2,-1\}$, the (generalized) hypergeometric series ${}_{r+1}F_{r}$ is defined by

\begin{align*}
	\begin{aligned}
		{}_{r+1}F_{r}
		\left[
		\begin{array}{*{7}{c} c}
			a_1, & a_2, & a_3, & \cdots , &  a_{r+1} \\
			& b_1,& b_2,  & \cdots ,& b_{r}
		\end{array}
		;z
		\right]:=\sum_{k=0}^{\infty}\frac{\left(a_1\right)_k\left(a_2\right)_k\left(a_3\right)_k\cdots\left(a_{r+1}\right)_k}{k!\left(b_1\right)_k\left(b_2\right)_k\cdots\left(b_{r}\right)_k}z^k,
	\end{aligned}	
\end{align*}	
where $(x)_0:=1$, $\left(x\right)_k:=x(x+1)\cdots(x+k-1)$ is the $Pochhammer$ $symbol$ (rising factorials). This series converges for $|z|<1$. Define the truncated hypergeometric series as follows:	
\begin{align*}
	\begin{aligned}
		{}_{r+1}F_{r}
		\left[
		\begin{array}{*{7}{c} c}
			a_1, & a_2, & a_3, & \cdots , &  a_{r+1} \\
			& b_1,& b_2,  & \cdots ,& b_{r}
		\end{array}
		;z
		\right]_n:=\sum_{k=0}^{n}\frac{\left(a_1\right)_k\left(a_2\right)_k\left(a_3\right)_k\cdots\left(a_{r+1}\right)_k}{k!\left(b_1\right)_k\left(b_2\right)_k\cdots\left(b_{r}\right)_k}z^k,
	\end{aligned}	
\end{align*}

	In $2016$, Deines et al. \cite{f0-1} defined the higher dimensional analogues of Legendre curves as follows:
\begin{align*}
	C_{n,r}: y^n=(x_1x_2\cdots x_{n-1})^{n-1}(1-x_1)\cdots(1-x_{n-1})(x_1-rx_2\cdots x_{n-1}).
\end{align*}		
Differing by at most a scalar multiple, the hypergeometric series
\begin{align*}
	\begin{aligned}
		{}_{n}F_{n-1}
		\left[
		\begin{array}{*{7}{c} c}
			\frac{i}{n}, & \frac{i}{n}, & \frac{i}{n}, & \cdots , &  \frac{i}{n} \\
			& 1,& 1,  & \cdots ,& 1
		\end{array}
		;r
		\right]
	\end{aligned}	
\end{align*}		
	for any integer $i$ with $1\leq i \leq n-1$, may be regarded as a period of $C_{n, r}$ on the condition that it is convergent.  Deines et al. \cite[Theorem 2]{f0-1} showed that the number of rational points on $C_{n,r}$ over finite field $\mathbb{F}_q$ can be expressed by using Gaussian hypergeometric functions. They also proved that for any integer $n\geq3$ and prime $p\equiv 1 \pmod{n}$,
\begin{align}
	\begin{aligned}
		{}_{n}F_{n-1}
		\left[
		\begin{array}{*{7}{c} c}
			\frac{n-1}{n}, & \frac{n-1}{n}, & \frac{n-1}{n}, & \cdots , &  \frac{n-1}{n} \\
			& 1,& 1,  & \cdots ,& 1
		\end{array}
		;r
		\right]_{p-1}\equiv -\Gamma_p\left(\frac{1}{n}\right)^n \pmod{p^2}, \label{in-1}
	\end{aligned}	
\end{align}		
where $\Gamma_p(x)$ denotes the $p$-adic $Gamma$ $function$. They also conjectured that \eqref{in-1} holds modulo $p^3$, which was later confirmed by Wang and Pan \cite{f0-2}. In $2023$, Guo \cite[Theorem 1.1]{f0-3} established a $q$-analogue of \eqref{in-1}. Wang and Pan \cite{f0-2} also proved that for any prime $p\equiv 1 \pmod{5}$, 
\begin{align}
	\begin{aligned}
		{}_{5}F_{4}
		\left[
		\begin{array}{*{7}{c} c}
			\frac{4}{5}, & \frac{4}{5}, & \frac{4}{5}, & \frac{4}{5} , &  \frac{4}{5} \\
			& 1,& 1,  & 1 ,& 1
		\end{array}
		;1
		\right]_{p-1}\equiv -\Gamma_p\left(\frac{1}{5}\right)^5 \pmod{p^3}, \label{in-2}
	\end{aligned}	
\end{align}			
Recently, Guo \cite[Theorem 1.2]{f0-4} presented a $q$-congruence related to \eqref{in-2}.	Deines et al. \cite{f0-1} mentioned the following supercongruence for truncated hypergeometric series when $p$ is a prime with $p\equiv 1 \pmod{5}$,
\begin{align}
	\begin{aligned}
		{}_{5}F_{4}
		\left[
		\begin{array}{*{7}{c} c}
			\frac{2}{5}, & \frac{2}{5}, & \frac{2}{5}, & \frac{2}{5} , &  \frac{2}{5} \\
			& 1,& 1,  & 1 ,& 1
		\end{array}
		;1
		\right]_{p-1}\equiv -\Gamma_p\left(\frac{1}{5}\right)^5\Gamma_p\left(\frac{2}{5}\right)^5 \pmod{p^4}. \label{in-3}
	\end{aligned}	
\end{align}	
Pan	et al. \cite[Theorem 6.3]{f0-5} proved that for any prime $p\equiv 2 \pmod{5}$,
\begin{align}
	\begin{aligned}
		{}_{5}F_{4}
		\left[
		\begin{array}{*{7}{c} c}
			\frac{2}{5}, & \frac{2}{5}, & \frac{2}{5}, & \frac{2}{5} , &  \frac{2}{5} \\
			& 1,& 1,  & 1 ,& 1
		\end{array}
		;1
		\right]_{p-1}\equiv -\frac{p}{10}\Gamma_p\left(\frac{1}{5}\right)^5\Gamma_p\left(\frac{2}{5}\right)^5 \pmod{p^6}. \label{in-4}
	\end{aligned}	
\end{align}		
Subsequently, Guo \cite[Theorem 1.4 and Theorem 1.7]{f0-6} derived the corresponding $q$-analogues of \eqref{in-3} and \eqref{in-4} via  Jackson's ${}_{8}\phi_{7}$ summation.
	
	The construction of $q$-analogues for congruences has become an active research topic, with a large number of achievements reported in the literature. For more relevant work, see  (\cite{f0-3}, \cite{f0-4}, \cite{f0-6},\cite{f0-7}, \cite{f0-8}, \cite{f0-9} and on so on ).
	
Inspired by the above work, we are led to consider whether the two ${}_{5}F_{4}$ hypergeometric series
\begin{align*}
	\begin{aligned}
		{}_{5}F_{4}
		\left[
		\begin{array}{*{7}{c} c}
			\frac{3}{5}, & \frac{3}{5}, & \frac{3}{5}, & \frac{3}{5} , &  \frac{3}{5} \\
			& 1,& 1,  & 1 ,& 1
		\end{array}
		;1
		\right] 
		\quad and \quad
		{}_{5}F_{4}
		\left[
		\begin{array}{*{7}{c} c}
			\frac{1}{5}, & \frac{1}{5}, & \frac{1}{5}, & \frac{1}{5} , &  \frac{1}{5} \\
			& 1,& 1,  & 1 ,& 1
		\end{array}
		;1
		\right] 
	\end{aligned}	
\end{align*}		
possess truncated $q$-analogues. The main purpose of this paper is to construct these  $q$-analogues.

Throughout the paper, for $n\in \mathbb{Z}^+$, the $q$-integer is defined as
\begin{align*}
	[n]=[n]_q=\frac{1-q^n}{1-q}=1+q+q^2+\cdots+q^{n-1}	\quad for \quad n\in \mathbb{Z}^+.
\end{align*}			
Fixing $q$ with $0<|q|<1$, the $q$-shifted factorial is defined by
\begin{align*}
(a;q)_{\infty}=\prod_{j=0}^{\infty}(1-aq^j) \quad and \quad (a;q)_n=\frac{(a;q)_\infty}{(aq^n;q)_\infty}		\quad for \quad n\in \mathbb{Z}.
\end{align*}	
For succinctness, we use the shorthand notation
\begin{align*}
	(a_1,a_2,a_3,\cdots, a_t;q)_n=(a_1;q)_n(a_2;q)_n(a_3;q)_n\cdots(a_t;q)_n		\quad for \quad n\in \mathbb{Z}\cup \{\infty\}.
\end{align*}	
In addition, the $n$-th cyclotomic polynomial is given by
\begin{align*}
	\Phi_n(q)=\prod_{\substack{1\le k \le n\\[3pt](n,k)=1}}(q-\zeta^k),
\end{align*}
where $\zeta$ denotes a primitive $n$th root of unity.

The rest of the paper is organized as follows.  We present our main results in the next section. In Section $3$, we  state three key Lemmas  by using Watson's ${}_{8}\phi_{7}$ transformation formula, the `creative microscoping' method introduced by Guo and Zudilin \cite{f0} and the Chinese remainder theorem for coprime polynomials.  The proofs of the main theorems are given in Section $4$.

\section{Main results}		
	\begin{thm}
		For a positive integer $n$ with  $n \equiv 3 \pmod{5}$, we have
		\begin{align}
			\begin{aligned}
			\sum_{k=0}^{(n-3)/5}\frac{(1+q^{10k+3})(q^6;q^{10})_k^5}{(1+q^3)(q^{10};q^{10})_k^5}q^{5k}&\equiv q^{6(3-n)/5}\frac{(q^{12},q^{16};q^{10})_{(n-3)/5}}{(q^6,q^{10};q^{10})_{(n-3)/5}} \left(1-[2n]^2\sum_{j=1}^{(n-3)/5}\frac{q^{10j}}{[10j]^2}\right)\\
			&\times \sum_{k=0}^{(n-3)/5}\frac{(q^{6},q^{6},q^{6},q^{7};q^{10})_kq^{10 k}}{(q^{10},q^{10},q^{12},q^{13};q^{10})_k} \pmod{\Phi_n(q)^3}. \label{th1}
		\end{aligned}	
	\end{align}	
	\end{thm}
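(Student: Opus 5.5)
We follow the creative microscoping scheme of Guo and Zudilin, with Watson's ${}_8\phi_7$ transformation as the analytic input. Watson's formula, with base $q^{10}$, reads
\begin{align*}
{}_8\phi_7\!\left[\begin{matrix}a,\,q^{5}\sqrt a,\,-q^{5}\sqrt a,\,b,\,c,\,d,\,e,\,q^{-10N}\\ \sqrt a,\,-\sqrt a,\,aq^{10}/b,\,aq^{10}/c,\,aq^{10}/d,\,aq^{10}/e,\,aq^{10+10N}\end{matrix};q^{10},\frac{a^2q^{20+10N}}{bcde}\right]
=\frac{(aq^{10},aq^{10}/(de);q^{10})_N}{(aq^{10}/d,aq^{10}/e;q^{10})_N}\,
{}_4\phi_3\!\left[\begin{matrix}aq^{10}/(bc),\,d,\,e,\,q^{-10N}\\ aq^{10}/b,\,aq^{10}/c,\,deq^{-10N}/a\end{matrix};q^{10},q^{10}\right].
\end{align*}
We take $a=q^6$, which turns the well-poised factor into $(1-q^{10k+6})/(1-q^6)$. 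Since $(1-q^{20k+6})/(1-q^{10k+3})=1+q^{10k+3}$, this should match the factor $(1+q^{10k+3})/(1+q^3)$ after the base substitution $q\mapsto q^{1/2}$ (or an equivalent rescaling). We set $N=(n-3)/5$, so that $q^{-10N}=q^{6-2n}$, and take one parameter, say $b=q^6$. The remaining parameters $c,d,e$ are chosen as $q^6$, $q^{6}a'$, $q^6/a'$-type perturbations, with a free parameter $a'$ (written $a$ below), so that they are admissible modulo $(1-aq^{2n})(a-q^{2n})$. Finally we set $d$ or $e$ equal to $q^{6-2n}$-type values so that both sides terminate at $k=(n-3)/5$. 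The right-hand ${}_4\phi_3$ is then the sum $\sum (q^6,q^6,q^6,q^7;q^{10})_k q^{10k}/(q^{10},q^{10},q^{12},q^{13};q^{10})_k$ up to the parameter perturbation. Its bottom entries $q^{12},q^{13}$ come from $aq^{10}/b$, $aq^{10}/c$, and from $deq^{-10N}/a$ after reversing or substituting.

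The first key step is a parametric identity. We show that the left-hand sum, with parameters $q^6$ replaced by $aq^6$, $q^6/a$ in two of the five slots, equals the $a$-deformed right-hand product times the ${}_4\phi_3$ exactly whenever $a=q^{\pm 2n}$. This gives a congruence modulo $(1-aq^{2n})(a-q^{2n})$. The second step is to establish the congruence modulo $\Phi_n(q)$ at $a=1$, and in fact modulo $(1-a)^2$-type information. This can be done by pairing the terms $k$ and $(n-3)/5-k$ to show the left side vanishes or matches, or by taking $a=1$ directly in a second instance of Watson's transformation with $q^{-10N}$ replaced by a $\Phi_n$-trivial parameter. We then combine the moduli $(1-aq^{2n})(a-q^{2n})$ and $\Phi_n(q)$ (as a factor of $(1-a)^2$) via the Chinese remainder theorem for coprime polynomials in $a$. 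This gives a congruence modulo $\Phi_n(q)(1-aq^{2n})(a-q^{2n})$, and letting $a\to1$ yields modulus $\Phi_n(q)^3$.

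The main obstacle is the CRT step at $a\to 1$. The interpolating polynomial produces a correction term, and expanding it to second order in $a-1$ must generate exactly the factor $1-[2n]^2\sum_{j=1}^{(n-3)/5}q^{10j}/[10j]^2$. Concretely, $(aq^{10},q^{10}/a;q^{10})_N/(q^{10};q^{10})_N^2$ expands as $1-(1-a)^2/a\cdot\sum_j q^{10j}/(1-q^{10j})^2$ plus higher order. Since $(1-aq^{2n})(a-q^{2n})\equiv -(1-a)^2q^{2n}+\dots$ modulo suitable terms, one must check that the $(1-a)^2$ coefficient, multiplied by $(1-q^{2n})^2$, gives $[2n]^2\sum q^{10j}/[10j]^2$, using $(1-q^{2n})^2/(1-q^{10j})^2=[2n]^2/[10j]^2$. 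We would verify this bookkeeping carefully, together with the prefactor $q^{6(3-n)/5}(q^{12},q^{16};q^{10})_N/(q^6,q^{10};q^{10})_N$ coming from $(aq^{10},aq^{10}/(de))_N/(aq^{10}/d,aq^{10}/e)_N$. We would also check that no denominator in the ${}_4\phi_3$ contains $\Phi_n(q)$ for $k\le N$, which follows since $10k+12,10k+13<2n$ are not multiples of $n$ in the relevant range, apart from checks when $10k+13=2n$ or $n$.
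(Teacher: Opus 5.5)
Your overall architecture (Watson's formula at $a=q^{\pm 2n}$, the Chinese remainder theorem with $\Phi_n(q)$, then $a\to1$) is the paper's. However, the step that produces the correction factor is missing, and your specialization of Watson's formula is wrong.

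\textbf{The missing step: an $a$-dependent residue modulo $\Phi_n(q)$.} Put $N=(n-3)/5$, $P(a)=\frac{(q^{16},q^{4-2n};q^{10})_N}{(aq^{10},q^{10-2n}/a;q^{10})_N}$, and let $S(a)$ denote the ${}_4\phi_3$. Watson's formula at $a=q^{\pm 2n}$ gives the $a$-independent prefactor $P(1)$. So your first step yields only $\mathrm{LHS}(a)\equiv P(1)S(a)$ modulo $(1-aq^{2n})(a-q^{2n})$. At $a=1$ this is the theorem modulo $\Phi_n(q)^2$, where the correction factor is invisible.

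To reach $\Phi_n(q)^3$ by the Chinese remainder theorem, you need the residue of the deformed sum modulo $\Phi_n(q)$ for \emph{generic} $a$, with a different $a$-dependence. A congruence modulo $\Phi_n(q)$ at $a=1$ alone adds nothing, since it already follows from step one. Also, $\Phi_n(q)$ is not ``a factor of $(1-a)^2$'' in any sense.

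Pairing does not supply the residue either. By the cited Lemma~3.2 with $d=10$, $l=6$, $m=N$, the $(N-k)$th term is $(-1)^N$ times the $k$th modulo $\Phi_n(q)$. So the sum vanishes only for $N$ odd, i.e.\ $n$ even. That is exactly the case where $[2n]^2P(1)S(1)\equiv 0\pmod{\Phi_n(q)^3}$, so the correction does not matter there. For odd $n$ the sum does not vanish: for $n=13$ its value at $q=1$ is $10773832/5^{10}\equiv 9\pmod{13}$.

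\textbf{How the paper gets it.} The paper introduces a second parameter, deforming to $q^6/a,\,q^6/b,\,abq^6$. It applies Watson at $a=q^{2n}$ and $a=q^{-2n}/b$. It then uses the $a\leftrightarrow b$ symmetry of the left side to apply Watson also at $b=q^{2n}$, where the prefactor is $P(a)$. Gluing by CRT and setting $b=1$ turns $b-q^{2n}$ into $1-q^{2n}$, a multiple of $\Phi_n(q)$. Modulo $\Phi_n(q)(1-aq^{2n})(a-q^{2n})$ this gives $P(1)S(a)+\frac{(1-aq^{2n})(a-q^{2n})}{(1-a)^2}\bigl(P(1)-P(a)\bigr)S(a)$. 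The limit $a\to1$ of the second term produces $-[2n]^2\sum_j q^{10j}/[10j]^2$.

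Equivalently, your ``second Watson instance'' works if carried out for generic $a$. Shift $q^6,\,aq^6,\,q^{10},\,q^{10}/a$ to $q^{6-2n},\,aq^{6+2n},\,q^{10+2n},\,q^{10-2n}/a$; these are congruences modulo $\Phi_n(q)$. Your expansion of $(aq^{10},q^{10}/a;q^{10})_N$ is the right computation, but without this step nothing in your argument makes that factor appear.

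\textbf{The Watson specialization.} In base $q^{10}$ the very-well-poised entries are $\pm q^{10}\sqrt a$, not $\pm q^5\sqrt a$. With $a=q^6$ they contribute $(1-q^{20k+6})/(1-q^6)$, not $(1-q^{10k+6})/(1-q^6)$. The factor $(1+q^{10k+3})/(1+q^3)$ arises only by taking one of $b,c,d,e$ equal to $q^3=\sqrt a$. Then $(q^3;q^{10})_k/(q^{13};q^{10})_k=(1-q^3)/(1-q^{10k+3})$ cancels the surplus factor. No rescaling $q\mapsto q^{1/2}$ does this.

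Your choice $b=c=q^6$, together with $d$, $e$ and $q^{-10N}$, has one $q^6$-type numerator parameter too many. Its ${}_4\phi_3$ would have $aq^{10}/(bc)=q^4$ on top and $q^{10},q^{10}$ below. The entries $q^7=q^{16}/(q^3q^6)$ and $q^{13}=q^{16}/q^3$ appear only when Watson's $b,c$ are $q^3,q^6$. This is the paper's choice of numerator parameters $q^3,\,q^6,\,q^6/b,\,bq^{6+2n},\,q^{6-2n}$, which also makes the argument $q^5$.
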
	
		
Let $n=p$ be a prime. Taking $q\rightarrow 1$ in \eqref{th1}, we get the following result.
	\begin{cor}
	For any odd prime $p$ with  $p \equiv 3 \pmod{5}$, there holds
	\begin{align*}
		\begin{aligned}
				{}_{5}F_{4}
			\left[
			\begin{array}{*{7}{c} c}
				\frac{3}{5}, & \frac{3}{5}, & \frac{3}{5}, & \frac{3}{5} , &  \frac{3}{5} \\
				& 1,& 1,  & 1 ,& 1
			\end{array}
			;1
			\right]_{(p-3)/5}&\equiv\frac{\left(\frac{6}{5}\right)_{(p-3)/5}\left(\frac{8}{5}\right)_{(p-3)/5}}{\left(1\right)_{(p-3)/5}\left(\frac{3}{5}\right)_{(p-3)/5}}\left(1-\frac{p^2}{25}\sum_{j=1}^{(p-3)/5}\frac{1}{j^2}\right)\\
			&\times\sum_{k=0}^{(p-3)/5}\frac{\left(\frac{3}{5}\right)_k^3\left(\frac{7}{10}\right)_k}
			{k!^2\left(\frac{6}{5}\right)_k\left(\frac{13}{10}\right)_k}  \pmod{p^3}. 
		\end{aligned}	
	\end{align*}	
\end{cor}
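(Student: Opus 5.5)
The corollary will follow from Theorem 2.1 by setting $n=p$ and letting $q\to1$. The plan is to check that every $q$-expression in \eqref{th1} has a well-defined limit that is a $p$-adic rational, and that the congruence modulo $\Phi_p(q)^3$ becomes a congruence modulo $p^3$.

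\emph{Step 1: limits of the $q$-factorials.} As $q\to1$,
\[
(q^{a};q^{10})_k/(1-q^{10})^k \to (a/10)_k .
\]
Since the numerator and denominator of each term carry the same number of $q$-shifted factorials, all the powers of $(1-q^{10})$ cancel. On the left, $(q^6;q^{10})_k^5/(q^{10};q^{10})_k^5\to (3/5)_k^5/k!^5$. The prefactor $(1+q^{10k+3})/(1+q^3)$ tends to $1$, and so do $q^{5k}$, $q^{10k}$ and $q^{6(3-n)/5}$. Hence the left side tends to the truncated ${}_5F_4$ sum at $(p-3)/5$.

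On the right, $(q^{12},q^{16};q^{10})_m/(q^6,q^{10};q^{10})_m$ tends to $(6/5)_m(8/5)_m/\bigl((3/5)_m(1)_m\bigr)$ with $m=(p-3)/5$. Similarly, $(q^6,q^6,q^6,q^7;q^{10})_k/(q^{10},q^{10},q^{12},q^{13};q^{10})_k$ tends to $(3/5)_k^3(7/10)_k/\bigl(k!^2(6/5)_k(13/10)_k\bigr)$. Finally, $[2p]^2/[10j]^2\to 4p^2/(100j^2)=p^2/(25j^2)$.

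\emph{Step 2: transferring the modulus.} Let $m=(p-3)/5$. All denominators in \eqref{th1} are products of factors $1-q^{10i+6}$, $1-q^{10i+10}$, $1-q^{10i+12}$, $1-q^{10i+13}$ and $[10j]$, with $0\le i<m$ and $1\le j\le m$. The exponents are at most $10m+3=2p-3$.

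The exponents that vanish mod $p$ would be $p$ itself, since $2p$ exceeds the range. Because $p\equiv3\pmod5$ and $p$ is odd, $p\equiv 3\pmod{10}$. So $p$ lies in the class $13\bmod 10$, which is realised as $10i+13=p$ with $i=m-1$. Thus $(q^{13};q^{10})_m$ contains the factor $1-q^{p}$, which is divisible by $\Phi_p(q)$.

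This is the main obstacle. I would handle it by noting that the whole congruence is first an identity of rational functions whose difference lies in $\Phi_p(q)^3$ times a rational function whose denominator is coprime to $\Phi_p(q)$. Indeed, Theorem 2.1 asserts congruence modulo $\Phi_n(q)^3$, which presupposes that denominators are coprime to $\Phi_n(q)$. One should check this convention, or equivalently verify that in the proof the factor $1-q^p$ cancels against the numerator $(q^7;q^{10})_k$. That numerator contains $1-q^{p+4}$?? If not, one uses the standard fact that the sum over $k$ with this pole is treated in the paper's proof in a way that makes sense modulo $\Phi_p^3$.

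Granting this, the difference of the two sides of \eqref{th1} has the form $\Phi_p(q)^3 A(q)/B(q)$ with $A,B\in\mathbb{Z}[q]$ and $B$ coprime to $\Phi_p(q)$. Moreover, $B(1)$ is a product of integers $10i+r$ and $10j$ less than $2p$ and different from $p$, so it is prime to $p$.

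\emph{Step 3: conclusion.} Since $\Phi_p(1)=p$, letting $q\to1$ gives that the difference of the limits equals $p^3A(1)/B(1)$. Here $A(1)\in\mathbb{Z}$ and $B(1)$ is a $p$-adic unit, which yields the stated congruence modulo $p^3$.
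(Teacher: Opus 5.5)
Your route is the paper's own: set $n=p$ in Theorem 2.1 and let $q\to1$, which is all the paper says. Your Step 1 limits are correct. But Step 2 has a genuine gap exactly at the point you call the main obstacle. You ``grant'' that the difference is $\Phi_p(q)^3A/B$ with $B(1)$ prime to $p$, and that is the whole content of the step.

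Your attempted justifications do not work:
\begin{itemize}
\item The pole is misplaced. Since $10(m-1)+13=2p-3$, the factor $1-q^{p}$ is $1-q^{10i+13}$ with $i=(p-13)/10$. It therefore lies in the denominator $(q^{13};q^{10})_k$ of every summand with $k\ge (p-3)/10$ (for $p\ge 13$; $p=3$ is trivial).
\item It cannot cancel against $(q^7;q^{10})_k$ or anything else inside the $k$-sum. The numerator exponents there are $\equiv 6,7\pmod{10}$ and less than $2p$, while the only positive multiple of $p$ below $2p$ is $p\equiv 3\pmod{10}$.
\end{itemize}
So the $k$-sum on its own really does have a pole along $\Phi_p(q)$, and the convention ``denominators coprime to $\Phi_n$'' cannot be applied factor by factor. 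Your claim that $B(1)$ is a product of integers different from $p$ is therefore unjustified. For the natural common denominator it is false, since $B$ then contains $[p]$.

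The missing observation is in the prefactor. The last factor of $(q^{16};q^{10})_m$ is $1-q^{10m+6}=1-q^{2p}$, and $(1-q^{2p})/(1-q^{p})=1+q^{p}$. Each summand of the $k$-sum contains $1-q^p$ at most once in its denominator, because all exponents there are $<2p$. So multiplying the prefactor into the $k$-sum writes the right-hand side of \eqref{th1} as a sum of terms whose denominators are products of $[e]$ with $e<2p$, $e\neq p$. This is after writing $1-q^e=(1-q)[e]$ and cancelling the balanced powers of $1-q$. The left side only adds the factor $1+q^3$. All of these are coprime to $\Phi_p(q)$ and take values prime to $p$ at $q=1$, since $p$ is odd. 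With this $B$ your Step 3 goes through, using that $\Phi_p$ is monic to get $A\in\mathbb{Z}[q,q^{-1}]$.

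If you prefer to lean on the meaning of Theorem 2.1's congruence, you can argue instead as follows. The reduced denominator of LHS$-$RHS divides a product of such $[e]$'s (now allowing $e=p$) and $1+q^3$, and it is coprime to $\Phi_p(q)$. Since $e<2p<p^2$, $\Phi_p$ is the only cyclotomic factor $\Phi_d$ of these with $p\mid\Phi_d(1)$. Either way, this step must actually be argued.

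On the rational side, the same cancellation shows up as the factor $8/5+m-1=p/5$ of $(8/5)_m$ absorbing the factor $p/10$ of $(13/10)_k$. So the corollary's right-hand side is $p$-integral only as a product, and must be read that way.
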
		
	
\begin{thm}
	For a positive integer $n$ with  $n \equiv 4 \pmod{5}$, we have
	\begin{align}
		\begin{aligned}
			\sum_{k=0}^{(2n-3)/5}\frac{(1+q^{10k+3})(q^6;q^{10})_k^5}{(1+q^3)(q^{10};q^{10})_k^5}q^{5k}&\equiv q^{6(3-2n)/5}\frac{(1-q)[4n](q^{12};q^{10})_{(2n-3)/5}}{(1-q^6)(q^{10};q^{10})_{(2n-3)/5}} \left(1-[4n]^2\sum_{j=1}^{(2n-3)/5}\frac{q^{10j}}{[10j]^2}\right)\\
			&\times \sum_{k=0}^{(2n-3)/5}\frac{(q^{6},q^{6},q^{6},q^{7};q^{10})_kq^{10 k}}{(q^{10},q^{10},q^{12},q^{13};q^{10})_k} \pmod{\Phi_n(q)^4}. \label{th2}
		\end{aligned}	
	\end{align}	
\end{thm}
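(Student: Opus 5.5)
Watson's transformation expresses a terminating very-well-poised ${}_8\phi_7$ as a balanced ${}_4\phi_3$:
\[
{}_8\phi_7\!\left[\begin{matrix}a,\ qa^{1/2},\ -qa^{1/2},\ b,\ c,\ d,\ e,\ q^{-N}\\ a^{1/2},\ -a^{1/2},\ aq/b,\ aq/c,\ aq/d,\ aq/e,\ aq^{N+1}\end{matrix};q,\frac{a^2q^{2+N}}{bcde}\right]
=\frac{(aq,aq/(de);q)_N}{(aq/d,aq/e;q)_N}\,{}_4\phi_3\!\left[\begin{matrix}aq/(bc),\ d,\ e,\ q^{-N}\\ aq/b,\ aq/c,\ deq^{-N}/a\end{matrix};q,q\right].
\]
I would apply it with base $q^{10}$ and $a=q^6$. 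Then $(1-aq^{20k})/(1-a)$ collapses, via $a^{1/2}=q^3$, to the factor $(1+q^{10k+3})/(1+q^3)$ times the $(1-q^{10k+3})$ pieces, which is exactly the summand shape on the left of \eqref{th2}. As in the proof of \eqref{th1}, I introduce a parameter $a$ (creative microscoping) and take
\[
b=q^6,\quad c=q^{6}a,\quad d=q^{6}/a,\quad e=q^{6-4n},\quad N=(2n-3)/5 .
\]
With $e=q^{6-4n}$ the series truncates naturally at $k=(2n-3)/5$, since $q^{-10N}=q^{6-4n}$. In this case the ${}_8\phi_7$ side becomes the parametric left-hand side $\sum_k (1+q^{10k+3})(q^6,q^6a,q^6/a,q^{6-4n};q^{10})_k \ldots$. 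The right side is a product of $q$-shifted factorials times a ${}_4\phi_3$ whose parameters at $a=1$ become $(q^6,q^6,q^6,q^7)/(q^{10},q^{10},q^{12},q^{13})$ after the reduction $q^{6-4n}\equiv q^6$ and the matching of the balanced parameter. The plan is to establish this parametric $q$-congruence modulo $\Phi_n(q)^2(1-aq^{4n})(a-q^{4n})$ (the relevant factors, since $\Phi_n(q)\mid 1-q^{4n}$) and then let $a\to1$. The extra factor $[4n](1-q)/(1-q^6)$ on the right of \eqref{th2} is what supplies the additional power of $\Phi_n(q)$, which upgrades the modulus from $\Phi_n^3$ to $\Phi_n^4$.

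\textbf{Step 1.} Modulo $(1-aq^{4n})$ and $(a-q^{4n})$, that is, for $a=q^{\pm4n}$, the parameters $c,d$ become $q^{6\mp 4n}$. The Watson identity then holds exactly with $e$ replaced accordingly. I expect the left side to equal the parametric left side, and the right side to match the parametric right side at these values. The Chinese remainder theorem for the coprime polynomials $1-aq^{4n}$ and $a-q^{4n}$ glues the two evaluations into one congruence modulo $(1-aq^{4n})(a-q^{4n})$.

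\textbf{Step 2.} Modulo $\Phi_n(q)^2$, independently of $a$, I would show that both sides vanish or agree. The left side should follow by pairing $k$ with $(2n-3)/5-k$; this is a standard argument using $q^{n}\equiv1$ and the symmetry of the very-well-poised summand. The right side vanishes because of $[4n]$ together with one more factor $(q^{6-4n};q^{10})_k$ divisible by $\Phi_n$, or because the prefactor $(q^{16};q^{10})_N$ contains $1-q^{2n}$-type factors. These two congruences are combined by CRT, since $\Phi_n(q)$ is coprime to $(1-aq^{4n})(a-q^{4n})$ as polynomials in $a$.

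\textbf{Step 3.} Let $a\to1$. Since $(1-aq^{4n})(a-q^{4n})\to(1-q^{4n})^2$, which contains $\Phi_n(q)^2$, the modulus becomes $\Phi_n(q)^4$. The prefactor $(q^6a,q^6/a;q^{10})_k$-type expressions, expanded to second order in $(1-a)$ using $(1-aq^{j})(1-q^{j}/a)=(1-q^j)^2+(1-a)^2q^j/a$-type identities, produce the factor $1-[4n]^2\sum_j q^{10j}/[10j]^2$. Here the denominator factors $(q^{10};q^{10})_N^2$ combine into $(1-q^{4n})^2$ over $[10j]^2$, which gives exactly that correction term.

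The main obstacle is Step 3: tracking the second-order terms in $(1-a)$ precisely enough that the correction sum comes out exactly as stated, while ensuring no denominator contains $\Phi_n(q)$. This holds because $10j\not\equiv0\pmod n$ for $1\le j\le N$, as $10N<4n$ and $n$ is coprime to $10$ except in the cases requiring checking. I would first do this expansion for the $\Phi_n^3$ analogue \eqref{th1}, and then transfer it.
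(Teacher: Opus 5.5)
There is a genuine gap. It sits in Step~2, and the set-up in Step~1 also does not match the sum in the theorem.

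\textbf{Step~1.} With base $q^{10}$ and Watson's $a=q^6$, the very-well-poised factor is $\frac{1-q^{20k+6}}{1-q^6}=\frac{(1-q^{10k+3})(1+q^{10k+3})}{(1-q^3)(1+q^3)}$. To be left with only $(1+q^{10k+3})/(1+q^3)$, one of the four free parameters must equal $q^3$, so that $aq/b=q^{13}$ cancels the extra factor. Your choice $b=q^6$, $c=q^6a$, $d=q^6/a$, $e=q^{6-4n}=q^{-10N}$ does not do this. It gives six numerator parameters reducing to $q^6$ (the theorem has five), the argument $q^{8n+2}$ instead of $q^5$, and a ${}_4\phi_3$ containing $q^4/a$ where $q^7$ is needed. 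Moreover, since your $e$ coincides with the terminating parameter, Watson's formula holds identically in $a$, so nothing is being microscoped. The choice that works is $b=q^3$, $c=d=q^6$, $e=aq^6$, with terminating parameter $q^6/a$. This is a genuine terminating ${}_8\phi_7$ only at $a=q^{4n}$ (and, by the $a\leftrightarrow 1/a$ symmetry, at $a=q^{-4n}$). Let $L(a)$ be the deformed left side, with numerator parameters $q^6/a,aq^6,q^6,q^6,q^6$ and denominators $q^{10}a,q^{10}/a,q^{10},q^{10},q^{10}$. This choice gives $L(a)\equiv X\,S(a)\pmod{(1-aq^{4n})(a-q^{4n})}$, where $X=\frac{(q^{16},q^{4-4n};q^{10})_N}{(q^{10},q^{10-4n};q^{10})_N}$ and $S(a)=\sum_{k=0}^{N}\frac{(q^7,q^6,q^6a,q^6/a;q^{10})_k}{(q^{10},q^{10},q^{12},q^{13};q^{10})_k}q^{10k}$.

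\textbf{Step~2.} The decisive problem is the second power of $\Phi_n(q)$. Your pairing $k\leftrightarrow N-k$ rests on the reflection congruence $\frac{(q^6x;q^{10})_{N-k}}{(q^{10}/x;q^{10})_{N-k}}\equiv(-x)^{N-2k}\frac{(q^6x;q^{10})_k}{(q^{10}/x;q^{10})_k}$ (up to a power of $q$), which holds only modulo $\Phi_n(q)$. So it shows only that the left side is $\equiv0\pmod{\Phi_n(q)}$; this is exactly the paper's Lemma~3.3. On the right, $(q^{6-4n};q^{10})_k$ is prime to $\Phi_n(q)$ for $k\le N$, because the numbers $6+10j$ with $j<N$ are never $n$, $2n$ or $3n$. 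The only factor $\Phi_n(q)$ there is the one in $1-q^{4n}\mid(q^{16};q^{10})_N$. Your plan therefore reaches only the modulus $\Phi_n(q)(1-aq^{4n})(a-q^{4n})$, i.e.\ $\Phi_n(q)^3$ after $a\to1$.

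\textbf{Step~3.} Your plan also cannot produce the factor $1-[4n]^2\sum_j q^{10j}/[10j]^2$. A right side such as $X\,S(a)$ is regular at $a=1$ and simply specializes, with no second-order term. If $L(a)\equiv X\,S(a)$ held modulo $\Phi_n(q)^2(1-aq^{4n})(a-q^{4n})$, the theorem would hold without that factor. Yet for odd $n$ the discarded term $X\,S(1)\,[4n]^2\sum_j\cdots$ is visibly divisible only by $\Phi_n(q)^3$.

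\textbf{The missing idea} is the paper's second parameter $b$. The steps are as follows.
\begin{enumerate}
\item Deform the summand to $(q^6/a,q^6/b,abq^6,q^6,q^6;q^{10})_k/(aq^{10},bq^{10},q^{10}/(ab),q^{10},q^{10};q^{10})_k$.
\item Evaluate by Watson at $a=q^{4n}$ and at $a=q^{-4n}/b$.
\item Obtain the case $b=q^{4n}$ from the $a\leftrightarrow b$ symmetry.
\item Glue the three evaluations by the Chinese remainder theorem, using the interpolating coefficients $\frac{(b-q^{4n})(ab^2-a^2b-1+abq^{4n})}{(a-b)(1-ab^2)}$ and $\frac{(a-q^{4n})(1-abq^{4n})}{(a-b)(1-ab^2)}$.
\item Add the vanishing of both sides modulo $\Phi_n(q)$.
\item Set $b=1$. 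This turns $b-q^{4n}$ into $1-q^{4n}$, which is the second $\Phi_n(q)$. It also turns the coefficients into $1+\frac{(a-q^{4n})(1-aq^{4n})}{(1-a)^2}$ and $-\frac{(a-q^{4n})(1-aq^{4n})}{(1-a)^2}$.
\item Let $a\to1$. The $(1-a)^{-2}$ in these coefficients, together with $(1-aq^{10j})(1-q^{10j}/a)=(1-q^{10j})^2-(1-a)^2q^{10j}/a$, is what produces $-[4n]^2\sum_{j=1}^{N}q^{10j}/[10j]^2$.
\end{enumerate}
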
	
			
For any prime  $n=p$, setting $q\rightarrow 1$ in \eqref{th2}, we obtain the following congruence.	
\begin{cor}
	For any odd prime $p$ with  $p \equiv 4 \pmod{5}$, we have
	\begin{align*}
		\begin{aligned}
			{}_{5}F_{4}
			\left[
			\begin{array}{*{7}{c} c}
				\frac{3}{5}, & \frac{3}{5}, & \frac{3}{5}, & \frac{3}{5} , &  \frac{3}{5} \\
				& 1,& 1,  & 1 ,& 1
			\end{array}
			;1
			\right]_{(2p-3)/5}&\equiv  \frac{2p\left(\frac{6}{5}\right)_{(2p-3)/5}}{3\left(1\right)_{(2p-3)/5}}\left(1-\frac{4p^2}{25}\sum_{j=1}^{(2p-3)/5}\frac{1}{j^2}\right)\\
			&\times\sum_{k=0}^{(2p-3)/5}\frac{\left(\frac{3}{5}\right)_k^3\left(\frac{7}{10}\right)_k}
			{k!^2\left(\frac{6}{5}\right)_k\left(\frac{13}{10}\right)_k}  \pmod{p^4}. 
		\end{aligned}	
	\end{align*}	
\end{cor}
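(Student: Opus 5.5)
The plan is to specialize Theorem 2 to $n=p$ and then let $q\to1$. The only real point to check is that this specialization is legitimate modulo $p^4$. Write the difference of the two sides of \eqref{th2} as a single rational function $A(q)/B(q)$ with $A,B\in\mathbb{Z}[q]$. The congruence \eqref{th2} says $A(q)=\Phi_p(q)^4C(q)$ for some $C(q)\in\mathbb{Z}[q]$, provided $B(q)$ is coprime to $\Phi_p(q)$; I take it that this is the meaning of \eqref{th2}, with all quantities viewed as elements of $\mathbb{Z}[q]$ localized away from $\Phi_p(q)$. Since $\Phi_p(1)=p$, evaluating at $q=1$ gives $A(1)/B(1)\in p^4\mathbb{Z}_{(p)}$, as long as $B(1)\not\equiv0\pmod p$.

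So the key step is to show that no denominator factor appearing in \eqref{th2} is divisible by $\Phi_p(q)$, and that each such factor takes a $p$-adic unit value at $q=1$. Put $m=(2p-3)/5$, so that $10m=4p-6<4p$.
\begin{itemize}
\item The factors $1-q^{10j}$ in $(q^{10};q^{10})_k$, and the $q$-integers $[10j]$, have $1\le j\le m<p$. Since $p\nmid 10j$, they are coprime to $\Phi_p(q)$. Their values at $q=1$ are $10j$ and $10j$, which are prime to $p$.
\item For $(q^{12};q^{10})_k$ and $(q^{13};q^{10})_k$, the exponents $10j+2$ and $10j+3$ are at most $4p$. The only multiples of $p$ in that range are $p,2p,3p,4p$. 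Because $p\equiv 9\pmod{10}$, these are congruent to $9,8,7,6\pmod{10}$, so none of them is $\equiv2$ or $3\pmod{10}$.
\item The remaining denominators $1+q^3$ and $1-q^6$ are coprime to $\Phi_p(q)$ since $p>6$. After cancelling the factor $1-q$, the latter has value $6$ at $q=1$, and $1+q^3$ has value $2$.
\end{itemize}
The smallest case $p=19$ is covered by these arguments. Hence the reduction to $q=1$ is valid modulo $p^4$.

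It then remains to compute the limits term by term, using $(q^{a};q^{10})_k/(1-q)^k\to 10^k(a/10)_k$.
\begin{itemize}
\item On the left, the ratio $(q^6;q^{10})_k/(q^{10};q^{10})_k$ tends to $(3/5)_k/k!$. The prefactor $(1+q^{10k+3})/(1+q^3)$ and the power $q^{5k}$ both tend to $1$. So the left side becomes the truncated ${}_5F_4$ with parameters $\tfrac35$ and length $(2p-3)/5$.
\item On the right, $q^{6(3-2p)/5}\to1$.
\item The factor $(1-q)[4p]/(1-q^6)$ tends to $4p/6=2p/3$.
\item The ratio $(q^{12};q^{10})_m/(q^{10};q^{10})_m$ tends to $(6/5)_m/(1)_m$.
\item The term $[4p]^2q^{10j}/[10j]^2$ tends to $16p^2/(100j^2)=4p^2/(25j^2)$.
\item The inner summand tends to $(3/5)_k^3(7/10)_k/\bigl(k!^2(6/5)_k(13/10)_k\bigr)$.
\end{itemize}
Putting these together gives exactly the stated congruence modulo $p^4$.

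The main obstacle is the coprimality bookkeeping for the denominators, in particular ruling out $\Phi_p$ in $(q^{12},q^{13};q^{10})_k$. This relies on the residue $p\equiv9\pmod{10}$. All other steps are routine limits.
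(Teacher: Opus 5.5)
Your proposal is correct and follows the paper's own argument: the paper obtains this corollary simply by putting $n=p$ in \eqref{th2} and letting $q\to1$, and your term-by-term limits reproduce the stated right-hand side. What you add is the justification the paper leaves implicit, namely that no denominator is divisible by $\Phi_p(q)$ and that each evaluates to a $p$-adic unit at $q=1$; this uses $p\equiv 9\pmod{10}$ to keep $\Phi_p(q)$ out of $(q^{12},q^{13};q^{10})_k$, and it requires writing each factor $1-q^{N}$ as $(1-q)[N]$, so that its value at $q=1$ is $N$ rather than $0$.
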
			

\begin{thm}
	For any  positive integer $n$ satisfying  $n \equiv 1 \pmod{5}$, we have
	\begin{align}
		\begin{aligned}
			\sum_{k=0}^{(n-1)/5}\frac{(1+q^{10k+1})(q^2;q^{10})_k^5}{(1+q)(q^{10};q^{10})_k^5}q^{15k}&\equiv q^{2(1-n)/5}\frac{(q^{4},q^{12};q^{10})_{(n-1)/5}}{(q^2,q^{10};q^{10})_{(n-1)/5}} \left(1-[2n]^2\sum_{j=1}^{(n-1)/5}\frac{q^{10j}}{[10j]^2}\right)\\
			&\times \sum_{k=0}^{(n-1)/5}\frac{(q^{2},q^{2},q^{2},q^{9};q^{10})_k}{(q^{4},q^{10},q^{10},q^{11};q^{10})_k}q^{10 k} \pmod{\Phi_n(q)^3}. \label{th-02}
		\end{aligned}	
	\end{align}	
\end{thm}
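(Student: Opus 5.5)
The plan mirrors the proof of Theorem 1: apply Watson's ${}_8\phi_7$ transformation in base $q^{10}$ to a parametric extension of the left-hand side of \eqref{th-02}, establish the resulting identity modulo three pairwise coprime polynomials, glue them by the Chinese remainder theorem, and finally let the extra parameter tend to $1$.

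\textbf{Step 1: matching Watson's formula.} Take Watson's formula with base $q\mapsto q^{10}$ and $a=q^2$. Then $q^{10}\sqrt a=q^{11}$ and $-q^{10}\sqrt a=-q^{11}$, so the ratio $(-q^{11};q^{10})_k/(-q;q^{10})_k$ is exactly $(1+q^{10k+1})/(1+q)$. The companion factor $(q^{11};q^{10})_k/(q;q^{10})_k$ should be cancelled by choosing $b=q$, since then $aq^{10}/b=q^{11}$. The remaining parameters are
\[
c=q^2,\qquad d=q^{2}x,\qquad e=q^{2}/x,\qquad q^{-10N}=q^{2-2n},\quad N=(n-1)/5 .
\]
Here $x$ is a free parameter, and $N$ is also the truncation point of the sum.

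\textbf{Step 2: the parametric identity.} I will check that with these choices the argument $a^2q^{20+10N}/(bcde)$ equals $q^{15}$. With $x=q^{\pm 2n}$ the ${}_8\phi_7$ side becomes a sum with $(q^2;q^{10})_k^3\,(q^{2-2n},q^{2+2n};q^{10})_k$ over $(q^{10};q^{10})_k^3\,(q^{10+2n},q^{10-2n};q^{10})_k$. On the other side, Watson's formula gives the prefactor
\[
\frac{(q^{12},\,aq^{10}/de;q^{10})_N}{(aq^{10}/d,\,aq^{10}/e;q^{10})_N}
\]
times a terminating balanced ${}_4\phi_3$ with upper parameters $q^{-10N},d,e,aq^{10}/bc=q^9$ and lower parameters $aq^{10}/b=q^{11}$, $aq^{10}/c=q^{10}$, $deq^{-10N}/a$. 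These reproduce the parameter sets $(q^2,q^2,q^2,q^9)/(q^4,q^{10},q^{10},q^{11})$ of the inner sum in \eqref{th-02} as $x\to1$ modulo the relevant polynomial, since $q^{-10N}\equiv q^2$ and $deq^{-10N}/a=q^{2-2n}\equiv q^2$ there. Concretely, the identity holds exactly when $x=q^{2n}$ or $x=q^{-2n}$, i.e.\ modulo $(1-xq^{2n})(x-q^{2n})$. Separately, modulo $\Phi_n(q)$ both sides reduce to the same truncated sum, because $(q^{2-2n};q^{10})_k$ and its companions vanish for $N<k\le n-1$ when $q$ is a primitive $n$-th root of unity.

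\textbf{Step 3: gluing and specialising.} Since $\Phi_n(q)$, $1-xq^{2n}$ and $x-q^{2n}$ are pairwise coprime, the Chinese remainder theorem gives a congruence for a parametric left-hand side modulo $\Phi_n(q)(1-xq^{2n})(x-q^{2n})$. Letting $x\to1$ turns this modulus into $\Phi_n(q)^3$. In this limit:
\begin{itemize}
\item $(q^{2}x,q^2/x;q^{10})_k/(q^{10}x,q^{10}/x;q^{10})_k$ becomes $(q^2;q^{10})_k^2/(q^{10};q^{10})_k^2$;
\item the prefactor becomes $q^{2(1-n)/5}(q^4,q^{12};q^{10})_N/(q^2,q^{10};q^{10})_N$ after using $q^{10N}=q^{2n-2}$ to rewrite $(q^{10\pm2n};q^{10})_N$;
\item the second-order terms in $x-1$ produce, via
\[
(1-q^{10j}x)(1-q^{10j}/x)=(1-q^{10j})^2-(1-x)(1-1/x)q^{10j},
\]
the factor $1-[2n]^2\sum_{j=1}^{N}q^{10j}/[10j]^2$.
\end{itemize}

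\textbf{Main obstacle.} I expect the hardest part to be the bookkeeping in Step 3: verifying that the CRT interpolation coefficients, evaluated at $x=1$, combine exactly into the factor $1-[2n]^2\sum q^{10j}/[10j]^2$. This requires expanding the Watson prefactor to second order in $(1-x)(1-1/x)$ and showing that the discrepancy between the two specialisations is divisible by $[2n]^2$. It is the same computation as for Theorem 1, with $q^3$ replaced by $q$ and $q^6$ by $q^2$, so the key lemmas of Section 3 should apply after this substitution.
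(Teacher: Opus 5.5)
There is a genuine gap in how you obtain the third factor of the modulus. As written, the argument cannot produce the factor $1-[2n]^2\sum_j q^{10j}/[10j]^2$.

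Your Step 1 parameters do not match Step 2. With Watson's $b=q$ and $c=q^2$, $d=q^2x$, $e=q^2/x$, $q^{-10N}=q^{2-2n}$, one gets $bcde=q^7$, so the argument is $q^{15+2n}$, not $q^{15}$. That identity holds for every $x$. But the sum it evaluates contains $(q^{2-2n};q^{10})_k/(q^{10+2n};q^{10})_k$ and $q^{(15+2n)k}$, so it agrees with the natural deformation
\[
S(x)=\sum_{k=0}^{N}\frac{(1+q^{10k+1})(q^2,q^2,q^2,q^2x,q^2/x;q^{10})_k}{(1+q)(q^{10},q^{10},q^{10},q^{10}x,q^{10}/x;q^{10})_k}\,q^{15k}
\]
only modulo $\Phi_n(q)$. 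Reduced modulo $\Phi_n(q)$, it gives $S(x)\equiv D(x)T(x)$, where
\[
D(x)=\frac{(q^{12},q^{8-2n};q^{10})_N}{(q^{10}x,q^{10-2n}/x;q^{10})_N},\qquad T(x)=\sum_{k=0}^N\frac{(q^2,q^2x,q^2/x,q^9;q^{10})_k}{(q^4,q^{10},q^{10},q^{11};q^{10})_k}q^{10k}.
\]
To get the argument $q^{15}$ you need $de=q^{4+2n}$. That means $c=d=q^2$, with $q^2x$ and $q^2/x$ playing $e$ and $q^{-10N}$ at $x=q^{\pm 2n}$. This gives $S(x)\equiv C\,T(x)$ modulo $(1-xq^{2n})(x-q^{2n})$, with the $x$-independent prefactor $C=\frac{(q^{12},q^{8-2n};q^{10})_N}{(q^{10},q^{10-2n};q^{10})_N}$. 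This $C$ equals $q^{2(1-n)/5}\frac{(q^4,q^{12};q^{10})_N}{(q^2,q^{10};q^{10})_N}$ exactly.

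Your Step 2 instead asserts that one identity holds modulo all three factors ("both sides reduce to the same truncated sum"). The justification offered, vanishing of $(q^{2-2n};q^{10})_k$ for $k>N$, only shows the truncation is harmless. If a single right-hand side did work, the Chinese remainder step would be vacuous. There would be no interpolation coefficients, and $C$ has no $x$-dependence to expand. Letting $x\to1$ would then give $S(1)\equiv C\,T(1)\pmod{\Phi_n(q)^3}$ with no correction factor.

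That conclusion is incompatible with the statement being proved. Take $n=11$. The factor $1-q^{11}$ of $(q^{11};q^{10})_k$ sits in the denominator of $T(1)$ and cancels the single $\Phi_{11}(q)$ in $(q^{12};q^{10})_2$. Evaluating at a primitive $11$th root of unity then shows that $C\,[22]^2\bigl(\sum_{j=1}^{2}q^{10j}/[10j]^2\bigr)T(1)$ has $\Phi_{11}(q)$-order exactly $2$. For $n\equiv6\pmod{10}$ your claim happens to be true, but there the correction term is itself $\equiv0\pmod{\Phi_n(q)^3}$.

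What is missing is a second, genuinely different Watson evaluation, together with an explicit gluing. The paper gets it from a second parameter. The sum with $(q^2/a,q^2/b,abq^2)$ is symmetric in $a$ and $b$. So besides the nodes $a=q^{2n}$ and $a=q^{-2n}/b$ (prefactor with $q^{10}b,q^{10-2n}/b$), there is the node $b=q^{2n}$ (prefactor with $q^{10}a,q^{10-2n}/a$). Explicit coefficients as in \eqref{ll-5}--\eqref{ll-6} glue these residues. Setting $b=1$ turns $b-q^{2n}$ into $1-q^{2n}$, which supplies the third $\Phi_n(q)$. It also leaves the coefficients $1+\frac{(a-q^{2n})(1-aq^{2n})}{(1-a)^2}$ and $-\frac{(a-q^{2n})(1-aq^{2n})}{(1-a)^2}$. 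The correction factor then comes from L'H\^opital applied to $\frac{(a-q^{2n})(1-aq^{2n})}{(1-a)^2}\bigl(C-D(a)\bigr)$.

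Your one-parameter route can be repaired in the same spirit:
\begin{enumerate}
\item Keep the two identities above separate.
\item Justify $S(x)\equiv D(x)T(x)\pmod{\Phi_n(q)}$. Before reducing, absorb any $1-q^n$ coming from $(q^{11};q^{10})_k$ into the factor $1-q^{2n}$ of $(q^{12};q^{10})_N$.
\item Glue with the idempotent $-\frac{(x-q^{2n})(1-xq^{2n})}{(1-x)^2}$, which is $\equiv1$ modulo $\Phi_n(q)$ and $\equiv0$ modulo $(1-xq^{2n})(x-q^{2n})$.
\end{enumerate}
This reproduces \eqref{ll-2-5}, i.e.\ the paper's argument after $b=1$. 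Note also a sign slip: $(1-q^{10j}x)(1-q^{10j}/x)=(1-q^{10j})^2+(1-x)(1-1/x)q^{10j}$.
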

		
Let $n=p$ be a prime. Taking $q\rightarrow 1$ in \eqref{th-02},  we can obtain the following congruence.	
\begin{cor}
 For any odd prime $p\equiv 1 \pmod{5}$, we have
\begin{align*}
	\begin{aligned}
		{}_{5}F_{4}
		\left[
		\begin{array}{*{7}{c} c}
			\frac{1}{5}, & \frac{1}{5}, & \frac{1}{5}, & \frac{1}{5} , &  \frac{1}{5} \\
			& 1,& 1,  & 1 ,& 1
		\end{array}
		;1
		\right]_{(p-1)/5}&\equiv\frac{\left(\frac{2}{5}\right)_{(p-1)/5}\left(\frac{6}{5}\right)_{(p-1)/5}}{\left(1\right)_{(p-1)/5}\left(\frac{1}{5}\right)_{(p-1)/5}}\left(1-\frac{p^2}{25}\sum_{j=1}^{(p-1)/5}\frac{1}{j^2}\right)\\
		&\times\sum_{k=0}^{(p-1)/5}\frac{\left(\frac{1}{5}\right)_k^3\left(\frac{9}{10}\right)_k}
		{k!^2\left(\frac{2}{5}\right)_k\left(\frac{11}{10}\right)_k}  \pmod{p^3}. 
	\end{aligned}	
\end{align*}			
\end{cor}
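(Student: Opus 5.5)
The corollary should follow from the preceding theorem, congruence \eqref{th-02}, by setting $n=p$ and letting $q\to1$. The plan has two parts: identify the termwise limits, then check that the congruence modulo $\Phi_p(q)^3$ really descends to a congruence modulo $p^3$.

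\emph{The limits.} Write $m=(p-1)/5$ and use $(q^{a};q^{10})_k/(1-q)^k\to 10^k(a/10)_k$ as $q\to1$. On the left, $(1+q^{10k+1})/(1+q)\to1$ and $q^{15k}\to1$. Also $(q^2;q^{10})_k^5/(q^{10};q^{10})_k^5\to(1/5)_k^5/k!^5$, so the left side tends to the truncated ${}_5F_4$ with parameters $\tfrac15$ summed up to $(p-1)/5$. On the right:
\begin{itemize}
\item $q^{2(1-p)/5}\to1$;
\item $(q^{4},q^{12};q^{10})_m/(q^{2},q^{10};q^{10})_m\to(\tfrac25)_m(\tfrac65)_m/\bigl((\tfrac15)_m(1)_m\bigr)$;
\item $[2p]^2q^{10j}/[10j]^2\to 4p^2/(100j^2)=p^2/(25j^2)$;
\item the summand $(q^2,q^2,q^2,q^9;q^{10})_kq^{10k}/(q^4,q^{10},q^{10},q^{11};q^{10})_k$ tends to $(\tfrac15)_k^3(\tfrac9{10})_k/\bigl(k!^2(\tfrac25)_k(\tfrac{11}{10})_k\bigr)$, since the powers of $10^k$ cancel.
\end{itemize}
This yields exactly the displayed right-hand side.

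\emph{Passing from $\Phi_p(q)^3$ to $p^3$.} The standard principle is this: if $A(q)-B(q)=\Phi_p(q)^3R(q)$ with $R$ a rational function whose denominator is a product of powers of $q$ and factors $1-q^{s}$ with $p\nmid s$, then letting $q\to1$ gives $A(1)-B(1)=p^3R(1)$ with $R(1)$ $p$-integral, because $\Phi_p(1)=p$. So I must show that both sides of \eqref{th-02}, with $n=p$, have denominators free of $\Phi_p(q)$ after suitable grouping. The expected obstacle is that this is not true factor by factor.

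\emph{Checking the denominators.} Since $p$ is odd and $p\equiv1\pmod5$, we have $p\equiv1\pmod{10}$. Check each denominator factor $1-q^{s}$ with $s\le 2p$:
\begin{itemize}
\item In $(q^{10};q^{10})_k$, $(q^2;q^{10})_k$ and $(q^4;q^{10})_k$ with $k\le m$, no exponent is a multiple of $p$, by parity and residue mod $10$.
\item $[10j]$ with $j\le m$ has $10j<2p$ and $10j\ne p$, so it is coprime to $\Phi_p(q)$.
\item The factor $(q^{11};q^{10})_k$ does contain $1-q^{p}$, namely $p=10j+11$ with $j=(p-11)/10\le k-1$, once $k>(p-11)/10$.
\end{itemize}
This single power of $\Phi_p(q)$ must be compensated by the prefactor. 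Indeed, $(q^{12};q^{10})_m$ contains $1-q^{2p}$ at $j=(p-6)/5=m-1$, so it supplies one factor $\Phi_p(q)$ in the numerator.

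\emph{Assembling.} I will rewrite the right side of \eqref{th-02} as a single sum, multiplying the prefactor into each term. After this, every term has denominator coprime to $\Phi_p(q)$, and the factor $1-[2p]^2\sum_j q^{10j}/[10j]^2$ is already a polynomial over such denominators. The left side is clearly fine. Hence the difference of the two sides is $\Phi_p(q)^3$ times an admissible rational function, and the limit $q\to1$ gives the congruence modulo $p^3$. When taking the limit, the compensated product $(1-q^{2p})/(1-q^{p})\to2$ is handled consistently, so the $q\to1$ values of the regrouped terms agree with the Pochhammer expressions stated in the corollary. That agreement is read $p$-adically, with the stated product and sum understood as the combined $p$-integral quantity.
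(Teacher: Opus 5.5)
Your proposal is correct and takes the same route as the paper, which simply sets $n=p$ in \eqref{th-02} and lets $q\to1$. The paper gives only that one line, so your check is a useful addition: the only $\Phi_p(q)$ in a denominator is the factor $1-q^{p}$ in $(q^{11};q^{10})_k$ for $k\ge (p-1)/10$, and it is cancelled by $1-q^{2p}$ in $(q^{12};q^{10})_{(p-1)/5}$, which is exactly what the paper leaves implicit in passing from $\Phi_p(q)^3$ to $p^3$.
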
		
	
\begin{thm}
	For any  positive integer $n$ satisfying  $n \equiv 3 \pmod{5}$, we have
	\begin{align}
		\begin{aligned}
			\sum_{k=0}^{(2n-1)/5}\frac{(1+q^{10k+1})(q^2;q^{10})_k^5}{(1+q)(q^{10};q^{10})_k^5}q^{15k}&\equiv \frac{[4n]q^{(2-4n)/5}(q^{4};q^{10})_{(2n-1)/5}}{(1+q)(q^{10};q^{10})_{(2n-1)/5}} \left(1-[4n]^2\sum_{j=1}^{(2n-1)/5}\frac{q^{10j}}{[10j]^2}\right)\\
			&\times \sum_{k=0}^{(2n-1)/5}\frac{(q^{2},q^{2},q^{2},q^{9};q^{10})_k}{(q^{4},q^{10},q^{10},q^{11};q^{10})_k}q^{10 k} \pmod{\Phi_n(q)^4}. \label{th-03}
		\end{aligned}	
	\end{align}	
\end{thm}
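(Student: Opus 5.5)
The plan is to follow the same route as for Theorems 2.1--2.3: obtain the result as the limit $a\to1$ of a one-parameter $q$-congruence built from Watson's ${}_8\phi_7$ transformation, and assemble that congruence with the Chinese remainder theorem. Write $N=(2n-1)/5$. Since $n\equiv 3\pmod 5$, $N$ is an integer and $q^{-10N}=q^{2-4n}\equiv q^{2}\pmod{\Phi_n(q)}$.

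First I will rewrite the left-hand side as a very-well-poised series in base $q^{10}$ with $a=q^2$. This uses
\[
\frac{1+q^{10k+1}}{1+q}=\frac{1-q^{20k+2}}{1-q^2}\cdot\frac{(q;q^{10})_k}{(q^{11};q^{10})_k},
\]
so that the pair $(q,q^{11})$ supplies one well-poised parameter, $b=q$. The other parameters will be $c=q^2$ and $d=aq^2$, $e=q^2/a$, together with the terminating parameter $q^{2-4n}$, which replaces the last $q^2$. I will check that the argument $a^2q^{N+2}/(bcde)$ of Watson's formula (read in base $q^{10}$) reduces to $q^{15}$.

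Writing $S(a)$ for this deformed left-hand side, I will treat its two roots in $a$ separately.
\begin{itemize}
\item At $a=q^{2n}$ and at $a=q^{-2n}$ one of $d,e$ becomes $q^{2\mp 2n}$. Choosing the terminating index appropriately, the sum terminates, and Watson's transformation evaluates $S(q^{\pm2n})$ exactly as a ratio of $q$-shifted factorials times a terminating ${}_4\phi_3$. Reducing this modulo $\Phi_n(q)$-type factors should produce the ${}_4\phi_3$ sum $\sum_k (q^2,q^2,q^2,q^9;q^{10})_k q^{10k}/(q^4,q^{10},q^{10},q^{11};q^{10})_k$ together with the prefactor $[4n]q^{(2-4n)/5}(q^4;q^{10})_N/((1+q)(q^{10};q^{10})_N)$. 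Comparing the two evaluations gives a congruence for $S(a)$ modulo $(1-aq^{2n})(a-q^{2n})$.
\item Separately, I will show that $S(a)$ and the proposed $a$-dependent right-hand side agree modulo $\Phi_n(q)^2$ for generic $a$. I would use the pairing $k\leftrightarrow N'-k$ of terms, where $N'$ is the index at which $(q^2;q^{10})_k$ first acquires a factor $\Phi_n$. The factor $[4n]$ and $(1-q^{2n})$-type terms should account for the extra power of $\Phi_n(q)$.
\end{itemize}

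Since $\Phi_n(q)^2$ and $(1-aq^{2n})(a-q^{2n})$ are coprime polynomials in $a$, the Chinese remainder theorem combines the two congruences into one modulo $\Phi_n(q)^2(1-aq^{2n})(a-q^{2n})$. I then let $a\to1$. The polynomial $(1-aq^{2n})(a-q^{2n})$ specializes to a multiple of $\Phi_n(q)^2$, which gives the modulus $\Phi_n(q)^4$.

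The correction factor comes out in this limit. The coefficients of the CRT combination involve $(aq^{10},q^{10}/a;q^{10})_N/(q^{10};q^{10})_N^2$. Expanding this ratio to second order around $a=1$, and using $(1-q^{2n})^2\equiv 0$ to the required order, should produce exactly
\[
1-[4n]^2\sum_{j=1}^{N}\frac{q^{10j}}{[10j]^2},
\]
as in the standard computation already used for the earlier theorems.

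I expect the main obstacle to be the congruence modulo $\Phi_n(q)^2$ with $a$ free. Because the truncation is at $(2n-1)/5$ rather than $(n-1)/5$, the numerator $(q^2;q^{10})_k^5$ is divisible by $\Phi_n(q)^5$ only for $k$ past the midpoint. The terms must therefore be paired so that the well-poised factor $1-q^{20k+2}$ produces the antisymmetry, and I must check that $[4n]$, which contains $\Phi_n(q)$ exactly once, is matched correctly. I would first try the reflection $k\mapsto (n-2)/5\cdot$-type symmetry modulo $\Phi_n(q)$ adapted to base $q^{10}$, verifying it numerically for $n=3$ and $n=8$ before writing the general argument.
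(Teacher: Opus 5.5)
Your overall architecture (Watson's transformation at two special values of a deformation parameter, the Chinese remainder theorem, then $a\to 1$) is the right one, but the deformation as written does not work.

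For $n\equiv 3\pmod 5$ the number $(2n-2)/10=(n-1)/5$ is not an integer. So $q^{2-2n}$ is not of the form $q^{-10m}$, and $(q^{2-2n};q^{10})_k$ never vanishes. At $a=q^{\pm 2n}$ nothing terminates, and the terminating Watson ${}_8\phi_7$ formula is not available.

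The terminating parameter must be $q^{2-4n}=q^{-10N}$. It has to arise by \emph{specializing} the deformation (for instance $q^2/a$ at $a=q^{4n}$), not by fixing it in place of the last $q^2$. If you fix it, your series at $a=1$ is no longer the left-hand side, and the Watson argument is $q^{15+4n}$ rather than $q^{15}$.

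Correspondingly, the relevant modulus is $(1-aq^{4n})(a-q^{4n})$. With $(1-aq^{2n})(a-q^{2n})$, the limit $a\to1$ would produce $[2n]^2$, not the $[4n]^2$ in the statement.

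The more serious gap is the congruence modulo $\Phi_n(q)^2$ for generic $a$. This is where the fourth power comes from, and you give no mechanism for it. The reflection $k\leftrightarrow N-k$ (Lemma 3.2 with $d=10$, $l=2$, $m=N$; $N$ is odd, so terms cancel in pairs) shows only that the truncated sum is $\equiv 0\pmod{\Phi_n(q)}$. There is also no extra divisibility to exploit: $(q^2;q^{10})_k$ is coprime to $\Phi_n(q)$ for every $0\le k\le N$, since the factor $1-q^{4n}$ enters only at $k=N+1$. So even after correcting to $a=q^{\pm 4n}$, your scheme reaches only $\Phi_n(q)(1-q^{4n})^2$, i.e.\ $\Phi_n(q)^3$.

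The paper supplies the missing power with a second parameter:
\begin{itemize}
\item It deforms three of the $q^2$'s into $q^2/a,\ q^2/b,\ abq^2$.
\item It gets the Watson evaluation modulo $(1-abq^{4n})(a-q^{4n})$ from $a=q^{4n}$ and $a=q^{-4n}/b$.
\item It uses the $a\leftrightarrow b$ symmetry to get a congruence modulo $b-q^{4n}$.
\item It adds the pairing congruence modulo $\Phi_n(q)$; the right-hand side vanishes there because $(q^{12};q^{10})_N$ contains $1-q^{4n}$.
\item It glues all of these by the Chinese remainder theorem while $b$ is still an indeterminate.
\end{itemize}
Only afterwards does $b=1$ turn $b-q^{4n}$ into $1-q^{4n}$. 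This is precisely the generic-$a$ congruence modulo $\Phi_n(q)(1-q^{4n})$ you wanted. Letting $a\to 1$ then gives the modulus $\Phi_n(q)(1-q^{4n})^3$, hence $\Phi_n(q)^4$.
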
		
	
Letting $n=p$ be a prime, and taking $q\rightarrow 1$ in \eqref{th-03},  we can arrive at the following consequence. 	
\begin{cor}	
For odd prime $p\equiv 3 \pmod{5}$, we have	
\begin{align*}
	\begin{aligned}
		{}_{5}F_{4}
		\left[
		\begin{array}{*{7}{c} c}
			\frac{1}{5}, & \frac{1}{5}, & \frac{1}{5}, & \frac{1}{5} , &  \frac{1}{5} \\
			& 1,& 1,  & 1 ,& 1
		\end{array}
		;1
		\right]_{(2p-1)/5}&\equiv2p\frac{\left(\frac{2}{5}\right)_{(2p-1)/5}}{\left(1\right)_{(2p-1)/5}}\left(1-\frac{4p^2}{25}\sum_{j=1}^{(2p-1)/5}\frac{1}{j^2}\right)\\
		&\times\sum_{k=0}^{(2p-1)/5}\frac{\left(\frac{1}{5}\right)_k^3\left(\frac{9}{10}\right)_k}
		{k!^2\left(\frac{2}{5}\right)_k\left(\frac{11}{10}\right)_k}  \pmod{p^4}. 
	\end{aligned}	
\end{align*}		
\end{cor}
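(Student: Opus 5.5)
The statement is the last corollary: the case $p\equiv 3\pmod 5$ of \eqref{th-03}. I would derive it by putting $n=p$ in \eqref{th-03} and letting $q\to 1$. Since $\Phi_p(q)$ tends to $p$ under this limit, a $q$-congruence modulo $\Phi_p(q)^4$ between rational functions whose denominators are coprime to $\Phi_p(q)$ becomes a congruence modulo $p^4$. The first task is to check that coprimality. For $0\le k\le (2p-1)/5$ the denominators are $(q^{10};q^{10})_k$, $(q^4;q^{10})_k$, $(q^{11};q^{10})_k$ and $[10j]$. Their factors have the form $1-q^{10i}$ with $10i\le 4p-2<5p$, and $10i$ is divisible by $p$ only if $i=0$, so these factors are prime to $\Phi_p(q)$. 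The remaining factors are $1-q^{10i+4}$ and $1-q^{10i+11}$ with $i<k$, and for these the exponent is below $4p$. The one real check is that no exponent $10i+4$ or $10i+11$ equals $p$, $2p$ or $3p$; I would verify this from $p\equiv 3 \pmod 5$ and the parity of $p$.

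Next come the termwise limits. The left side gives $\frac{(1+q^{10k+1})}{(1+q)}\to 1$ and $(q^2;q^{10})_k/(q^{10};q^{10})_k\to (1/5)_k/k!$, so it tends to the truncated ${}_5F_4$ with parameters $\tfrac15$. In the inner sum on the right, $(q^2,q^2,q^2,q^9;q^{10})_k/(q^4,q^{10},q^{10},q^{11};q^{10})_k$ tends to $(\tfrac15)_k^3(\tfrac9{10})_k/\bigl(k!^2(\tfrac25)_k(\tfrac{11}{10})_k\bigr)$. For the prefactor, $[4p]\to 4p$, $1+q\to 2$ and $q^{(2-4p)/5}\to 1$, giving $2p\,(\tfrac25)_{(2p-1)/5}/(1)_{(2p-1)/5}$. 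Finally, $[4p]^2q^{10j}/[10j]^2\to 16p^2/(100j^2)=4p^2/(25j^2)$.

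The main subtlety is that the right side carries the factor $[4p]$, which contains $\Phi_p(q)$. The remaining factors therefore only need to be correct modulo $\Phi_p(q)^3$. This does not weaken the conclusion: the $q\to1$ limit of an identity of the form $A-B=\Phi_p(q)^4 C(q)$, with $C$ having denominator prime to $\Phi_p$, still gives $p^4$-divisibility of the difference in $\mathbb{Z}_{(p)}$. Every quantity involved has $p$-integral rational limit, so the corollary follows directly once the denominator check above is done carefully. That check is the only place where $p\equiv3\pmod5$ and the exact truncation range enter, and I expect it to be the only step needing care.
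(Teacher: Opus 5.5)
Your proposal is correct and follows the paper's route exactly: the paper obtains this corollary by setting $n=p$ in Theorem 2.7 and letting $q\to 1$. Your check that no denominator factor $1-q^{10i}$, $1-q^{10i+4}$, $1-q^{10i+11}$ or $[10j]$ is divisible by $\Phi_p(q)$ (using $p\equiv 3\pmod 5$ and parity) and your termwise limits, in particular $[4p]/(1+q)\to 2p$ and $[4p]^2q^{10j}/[10j]^2\to 4p^2/(25j^2)$, supply the details the paper leaves implicit.
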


\section{Key Lemmas}
Recall that the basic hypergeometric ${}_{r+1}\phi_{r}$ (see \cite{f1}) is given by
	\begin{align*}
	\begin{aligned}	
		{}_{r+1}\phi_{r}
		\left[
		\begin{array}{*{7}{c} c}
			a_1, & a_2, & a_3, & \cdots , &  a_{r+1} \\
			& b_1,& b_2,  & \cdots ,& b_r
		\end{array}
		;\ q,\ z
		\right]
		=\sum_{k=0}^{\infty}\frac{(a_1,a_2,a_3\cdots, a_{r+1})}{(q,b_1,b_2\cdots,b_r)}z^k.
	\end{aligned}	
\end{align*}
Then Watson's ${}_{8}\phi_{7}$ transformation \cite[Appendix (III.18)]{f1} can be stated as follows:
	\begin{align}
		\begin{aligned}	
{}_{8}&\phi_{7}
\left[
\begin{array}{*{7}{c} c}
	a, & qa^{\frac{1}{2}}, & -qa^{\frac{1}{2}}, & b, & c, & d, & e, &  q^{-n} \\
	& a^{\frac{1}{2}},& -a^{\frac{1}{2}}, & aq/b, & aq/c, & aq/d, & aq/e,& aq^{n+1}
\end{array}
;\ q,\ \frac{a^2 q^{n+2}}{bcde}
\right]\\
&=
\frac{(aq, aq/de; q)_n}{(aq/d, aq/e; q)_n}
{}_4\phi_3\left[\begin{array}{*{7}{c} c}
	 aq/bc,& d, & e,& q^{-n} \\
	& aq/b, & aq/c, & deq^{-n}/a
\end{array}; q, q\right]. \label{ll-0}
\end{aligned}	
\end{align}

\begin{lem}
	 Let $n$ be an integer with $n \equiv 3 \pmod{5}$, and let $a$, $b$ be indeterminates. Then modulo $(1-abq^{2n})(a-q^{2n})(b-q^{2n})$,
	\begin{align}
		\begin{aligned}
		\sum_{k=0}^{(n-3)/5}&\frac{(1+q^{10k+3})(q^6/a,q^6/b,abq^6,q^6,q^6;q^{10})_k}{(1+q^3)(aq^{10},bq^{10},q^{10}/(ab),q^{10},q^{10};q^{10})_k}q^{5k}\\
		&\equiv \{\frac{(b-q^{2n})(ab^2-a^2b-1+abq^{2n})}{(a-b)(1-ab^2)}\frac{(q^{16},q^{4-2n};q^{10})_{(n-3)/5}}{(q^{10}b,q^{10-2n}/b;q^{10})_{(n-3)/5}}\\
		&+\frac{(a-q^{2n})(1-abq^{2n})}{(a-b)(1-ab^2)}\frac{(q^{16},q^{4-2n};q^{10})_{(n-3)/5}}{(q^{10}a,q^{10-2n}/a;q^{10})_{(n-3)/5}}\}\\
		&\times\sum_{k=0}^{(n-3)/5}\frac{(q^{7},q^{6}/a,q^6/b,q^6ab;q^{10})_k}{(q^{10},q^{10},q^{12},q^{13};q^{10})_k}q^{10k}. \label{ll-1}
	\end{aligned}	
\end{align}
\end{lem}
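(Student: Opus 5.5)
The plan is to prove the congruence modulo each of the three pairwise coprime factors $1-abq^{2n}$, $a-q^{2n}$, $b-q^{2n}$ separately. In each case Watson's transformation \eqref{ll-0} (with $q\mapsto q^{10}$) evaluates the left-hand side exactly. The three evaluations are then glued together by the Chinese remainder theorem for coprime polynomials in $a,b$.

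In Watson's formula \eqref{ll-0} we replace $q$ by $q^{10}$ and take
$a=q^6$, so that $q^{10}a^{1/2}=q^{13}$. We then have
\[
\frac{(1-q^{13}\cdot q^{10k})(1+q^{13}\cdot q^{10k})}{(1-q^3)(1+q^3)}\cdot\frac{1-q^3}{1-q^{13}q^{10k}}\cdots,
\]
and after simplifying, the factor $(1+q^{10k+3})/(1+q^3)$ arises from the $-qa^{1/2}$ versus $-a^{1/2}$ pair. The plain factor $(1-q^{10k+3})/(1-q^3)$ should cancel, and I will check this bookkeeping carefully. The remaining four parameters are
\[
b\mapsto q^6/a,\qquad c\mapsto q^6/b,\qquad d\mapsto abq^6,\qquad e\mapsto q^6,
\]
and the terminating parameter $q^{-10N}$ is chosen differently in each case. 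Then $aq/b=q^{10}a$, $aq/c=q^{10}b$, $aq/d=q^{10}/(ab)$, $aq/e=q^{10}$, and the argument is
\[
\frac{q^{12}q^{10N}q^{20}}{q^{24}}\cdot\frac{q^{-10N}\cdot\ldots}{}
\]
which I expect to reduce to $q^5$ once $q^{-10N}$ is matched.

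\emph{Case $a=q^{2n}$.} Then $q^6/a=q^{6-2n}=q^{-10(n-3)/5}$, so this parameter plays the role of $q^{-10N}$ with $N=(n-3)/5$. The sum truncates naturally at $(n-3)/5$, and the extra $e$-slot is used for $q^6$. Watson's formula gives the prefactor $(q^{16},q^{4-2n};q^{10})_N/(q^{10}b,q^{10-2n}/b;q^{10})_N$ times a ${}_4\phi_3$. After renaming, that ${}_4\phi_3$ should be exactly $\sum (q^7,q^6/a,q^6/b,q^6ab)_k q^{10k}/(q^{10},q^{10},q^{12},q^{13})_k$; here $q^{10}a/(bc)=q^7\cdot$(a factor to verify), and the denominators $q^{12},q^{13}$ come from $aq/b$, $aq/c$ and $deq^{-n}/a$. \emph{Case $b=q^{2n}$} is symmetric in $a\leftrightarrow b$, with prefactor involving $(q^{10}a,q^{10-2n}/a)$. \emph{Case $ab=q^{-2n}$}, i.e. $1-abq^{2n}=0$: now $abq^6=q^{6-2n}$ is the terminating parameter, and Watson yields a prefactor. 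Using $ab=q^{-2n}$, this prefactor can be rewritten as either of the two expressions above.

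Write $P_a$ and $P_b$ for the two prefactors, with the same ${}_4\phi_3$ factor $S$. On $a=q^{2n}$ the coefficient $\frac{(b-q^{2n})(ab^2-a^2b-1+abq^{2n})}{(a-b)(1-ab^2)}$ should equal $1$ and the other coefficient should vanish; the latter is clear since it contains $a-q^{2n}$. Checking the former: with $a=q^{2n}$ the numerator is $(b-a)(ab^2-a^2b-1+a^2b)=(b-a)(ab^2-1)$, which matches the denominator. On $b=q^{2n}$ the first coefficient vanishes and the second equals $(a-b)(1-ab^2)/((a-b)(1-ab^2))=1$. On $ab=q^{-2n}$ I need the two coefficients to sum to $1$, and I need $P_a\equiv P_b$ there. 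For the latter, $q^{10-2n}/b=q^{10}a$, so the pairs $(q^{10}b,q^{10-2n}/b)$ and $(q^{10}a,q^{10-2n}/a)$ coincide. The sum of the coefficients, with $abq^{2n}=1$, is
\[
\frac{(b-q^{2n})(ab^2-a^2b)}{(a-b)(1-ab^2)}=\frac{b(b-q^{2n})a(b-a)}{(a-b)(1-ab^2)}=\frac{-ab(b-q^{2n})}{1-ab^2},
\]
and since $abq^{2n}=1$ this is $(1-ab^2)/(1-ab^2)=1$.

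The main obstacle is this third case. There the left-hand sum must be shown to equal the truncated sum at $(n-3)/5$. This holds because the terms with $(n-3)/5<k$ carry the factor $(q^{6-2n};q^{10})_k=0$ from $abq^6$, and Watson applies with $d=q^{6-2n}$ as the terminating parameter. I must then verify that the resulting prefactor, $(q^{16},\,q^{10}/(ab\cdot q^6)\cdot\ldots)$, matches $P_a$ under $ab=q^{-2n}$, and that the ${}_4\phi_3$ produced is the same $S$. Matching $S$ may require permuting the roles of $c,d,e$ in Watson's formula, and this parameter matching is where I expect the real work.

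Finally, $(1-abq^{2n})$, $(a-q^{2n})$ and $(b-q^{2n})$ are pairwise coprime in $\mathbb{Q}(q)[a,b]$. Since both sides agree modulo each factor, and the denominators $(a-b)(1-ab^2)$ are coprime to the modulus, the congruence holds modulo the product.
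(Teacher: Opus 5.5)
Your skeleton matches the paper's: evaluate exactly on $a=q^{2n}$, $b=q^{2n}$ and $ab=q^{-2n}$ via Watson, then glue with the Chinese remainder theorem. Your checks of the two coefficients on each component are correct, and so is your check that the two prefactors agree on $ab=q^{-2n}$.

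\textbf{The gap.} The Watson step carries all the content, and you both leave it open and set it up incorrectly. Write $\alpha,\beta,\gamma,\delta,\varepsilon$ for Watson's $a,b,c,d,e$ in \eqref{ll-0}, with base $q^{10}$. Once $\alpha=q^6$ and the pair $\pm q^{13}$ over $\pm q^{3}$ are used, the summand of \eqref{ll-1} has only four free numerator parameters left: $q^6/a$, $q^6/b$, $abq^6$, $q^6$. Watson needs five: $\beta,\gamma,\delta,\varepsilon,q^{-10N}$.

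Your list puts all four into $\beta,\gamma,\delta,\varepsilon$ and then still needs a terminating parameter. Moreover, the factor $(1-q^{10k+3})/(1-q^3)$ does not ``cancel'' on its own. The missing fifth parameter is $\beta=q^3$. Then $\alpha q^{10}/\beta=q^{13}$, so $(q^3;q^{10})_k/(q^{13};q^{10})_k$ removes $(1-q^{10k+3})/(1-q^3)$ and leaves exactly $(1+q^{10k+3})/(1+q^3)$. With your choice $\beta=q^6/a$, $\gamma=q^6/b$ you get $\alpha q^{10}/(\beta\gamma)=abq^4$, not $q^7$. So the resulting ${}_4\phi_3$ is not the one in \eqref{ll-1}.

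\textbf{The correct specialization.} Take $\alpha=q^6$, $\beta=q^3$, $\gamma=q^6$, and $\{\delta,\varepsilon,q^{-10N}\}=\{q^6/a,\,q^6/b,\,abq^6\}$ with $N=(n-3)/5$. The terminating slot goes to whichever of the three equals $q^{6-2n}$ on the component at hand. Since $\delta\varepsilon q^{-10N}=q^{18}$ in every case, the argument is $q^{12}q^{20}/(q^{3}q^{6}q^{18})=q^5$. Using $\alpha q^{10}/(\beta\gamma)=q^7$, $\alpha q^{10}/\beta=q^{13}$, $\alpha q^{10}/\gamma=q^{10}$ and $\delta\varepsilon q^{-10N}/\alpha=q^{12}$, \eqref{ll-0} gives
\[
\frac{(q^{16},q^{16}/(\delta\varepsilon);q^{10})_N}{(q^{16}/\delta,q^{16}/\varepsilon;q^{10})_N}\sum_{k=0}^{N}\frac{(q^7,q^6/a,q^6/b,abq^6;q^{10})_k}{(q^{10},q^{10},q^{12},q^{13};q^{10})_k}q^{10k}.
\]
This ${}_4\phi_3$ is symmetric in $\delta,\varepsilon,q^{-10N}$, so it is the same on all three components and no permuting is needed.

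The numerator of the prefactor is always $(q^{16},q^{4-2n};q^{10})_N$. Its denominator is, on the three components respectively:
\begin{itemize}
\item[(i)] $(q^{10}b,q^{10-2n}/b;q^{10})_N$ on $a=q^{2n}$;
\item[(ii)] $(q^{10}a,q^{10-2n}/a;q^{10})_N$ on $b=q^{2n}$;
\item[(iii)] $(q^{10}a,q^{10}b;q^{10})_N$ on $ab=q^{-2n}$, which agrees with both (i) and (ii) when $ab=q^{-2n}$.
\end{itemize}
So the third case you flag as the main obstacle is no harder than the others. The paper simply notes that $a=q^{2n}$ and $a=q^{-2n}/b$ produce literally the same ${}_8\phi_7$. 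What your proposal actually lacks is the parameter $q^3$ and the resulting identification of the ${}_4\phi_3$.
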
		
\pf. For $a=q^{2n}$ or $q^{-2n}/b$, the left-hand side of \eqref{ll-1} becomes
\begin{align}
	\begin{aligned}
		\sum_{k=0}^{(n-3)/5}&\frac{(1+q^{10k+3})(bq^{6+2n},q^{6-2n},q^{6}/b,q^6,q^6;q^{10})_k}{(1+q^3)(q^{10+2n},a^{10-2n}/b,q^{10}b,q^{10},q^{10};q^{10})_k}q^{10k}\\
		&=
		{}_{8}\phi_{7}
		\left[
		\begin{array}{*{7}{c} c}
			q^6, & q^{13}, & -q^{13}, & q^3, & q^6, & q^6/b, & bq^{6+2n}, & q^{6-2n} \\
			&q^{3}, & -q^{3}, & q^{13}, & q^{10}, & q^{10}b, & q^{10-2n}/b, & q^{10+2n}
		\end{array}
		;\ q^{10},\ q^5
		\right], \label{ll-2}
	\end{aligned}	
\end{align}
 By \eqref{ll-0}, the right-hand side of \eqref{ll-2} can be simplified to
\begin{align*}
	\begin{aligned}
	&\frac{(q^{16},q^{4-2n};q^{10})_{(n-3)/5}}{(q^{10}b,q^{10-2n}/b;q^{10})_{(n-3)/5}}\sum_{k=0}^{(n-3)/5}\frac{(q^{6-2n},bq^{6+2n},q^{6}/b,q^6;q^{10})_k}{(q^{10},q^{10},q^{12},q^{13})_k}q^{10k}\\
	&=\frac{(q^{16},q^{4-2n};q^{10})_{(n-3)/5}}{(q^{10}b,q^{10-2n}/b;q^{10})_{(n-3)/5}}\sum_{k=0}^{(n-3)/5}\frac{(q^{6}/a,abq^{6},q^{6}/b,q^6;q^{10})_k}{(q^{10},q^{10},q^{12},q^{13})_k}q^{10k}.
	\end{aligned}	
\end{align*}
Given that the polynomial $a-q^{2n}$ is coprime to $1-abq^{2n}$, we deduce that the $q$-congruence:
modulo $(1-abq^{2n})(a-q^{2n})$,
	\begin{align}
	\begin{aligned}
		\sum_{k=0}^{(n-3)/5}&\frac{(1+q^{10k+3})(q^6/a,q^6/b,abq^6,q^6,q^6;q^{10})_k}{(1+q^3)(aq^{10},bq^{10},q^{10}/(ab),q^{10},q^{10};q^{10})_k}q^{5k}\\
		&\equiv \frac{(q^{16},q^{4-2n};q^{10})_{(n-3)/5}}{(q^{10}b,q^{10-2n}/b;q^{10})_{(n-3)/5}}\sum_{k=0}^{(n-3)/5}\frac{(q^{6}/a,abq^{6},q^{6}/b,q^6;q^{10})_k}{(q^{10},q^{10},q^{12},q^{13})_k}q^{10k}. \label{ll-3}
	\end{aligned}	
\end{align}
Observing that the left-hand side of \eqref{ll-3} is symmetric in $a$ and $b$, we conclude that, modulo $b-q^{2n}$,
	\begin{align}
	\begin{aligned}
		\sum_{k=0}^{(n-3)/5}&\frac{(1+q^{10k+3})(q^6/a,q^6/b,abq^6,q^6,q^6;q^{10})_k}{(1+q^3)(aq^{10},bq^{10},q^{10}/(ab),q^{10},q^{10};q^{10})_k}q^{5k}\\
		&\equiv \frac{(q^{16},q^{4-2n};q^{10})_{(n-3)/5}}{(q^{10}a,q^{10-2n}/a;q^{10})_{(n-3)/5}}\sum_{k=0}^{(n-3)/5}\frac{(q^{6}/a,abq^{6},q^{6}/b,q^6;q^{10})_k}{(q^{10},q^{10},q^{12},q^{13})_k}q^{10k}. \label{ll-4}
	\end{aligned}	
\end{align}
Moreover, it is easy to check that
\begin{align}
	\frac{(b-q^{2n})(ab^2-a^2b-1+abq^{2n})}{(a-b)(1-ab^2)}\equiv 1 \pmod{(1-abq^{2n})(a-q^{2n})}, \label{ll-5}
\end{align}
and
\begin{align}
	\frac{(a-q^{2n})(1-abq^{2n})}{(a-b)(1-ab^2)}\equiv 1 \pmod{(b-q^{2n})}. \label{ll-6}
\end{align}
Since $(a-q^{2n})(1-qbq^{2n})$ and $(b-q^{2n})$ are coprime polynomials in $q$, we apply the Chinese remainder theorem for polynomials, from \eqref{ll-3}--\eqref{ll-6}, we arrive at the desired $q$-congruence \eqref{ll-1}. \qed\\

\begin{lem} \cite[Lemma 2.1]{f2}
	For positive integer $d$, $m$ and $n$ with $m<n$, and an integer $l$ such that $dm\equiv -l \pmod{n}$, it holds for $0\leq k\leq m$,
	\begin{align*}
		\begin{aligned}
			\frac{(q^la;q^d)_{m-k}}{(q^d/a;q^d)_{m-k}}\equiv (-a)^{m-2k}\frac{(q^la;q^d)_k}{(q^d/a;q^d)_k}q^{m(dm-d+2l)/2+(d-l)k} \pmod{\Phi_n(q)}.
		\end{aligned}	
	\end{align*}
\end{lem}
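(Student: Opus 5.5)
Since $dm\equiv -l \pmod n$, we have $q^{dm}\equiv q^{-l}$ modulo $\Phi_n(q)$: $\Phi_n(q)$ divides $q^n-1$, so $q^n\equiv 1$ and $q$ is invertible modulo $\Phi_n(q)$. The plan is to write $(q^la;q^d)_{m-k}$ as $(q^la;q^d)_m/(q^{l+d(m-k)}a;q^d)_k$, and similarly for the denominator. Each factor of the shortened products is then reflected by the substitution $q^{dm}\to q^{-l}$.

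For the numerator, fix $0\le j\le m-k-1$ and set $i=m-1-j$, so $k\le i\le m-1$. Then
\[
1-aq^{l+dj}=1-aq^{l+dm-d-di}\equiv 1-aq^{-d-di} = -aq^{-d-di}\bigl(1-q^{d+di}/a\bigr) \pmod{\Phi_n(q)}.
\]
As $j$ runs over $0,\dots,m-k-1$, the index $i$ runs over $k,\dots,m-1$. Hence
\[
(q^la;q^d)_{m-k}\equiv (-a)^{m-k}q^{-\sum_{i=k}^{m-1}(d+di)}\,\frac{(q^d/a;q^d)_m}{(q^d/a;q^d)_k}.
\]
In the same way, for the denominator,
\[
1-q^{d+dj}/a\equiv 1-q^{-l-di}/a=-(q^{-l-di}/a)\bigl(1-aq^{l+di}\bigr),
\]
which gives
\[
(q^d/a;q^d)_{m-k}\equiv(-1/a)^{m-k}q^{-\sum_{i=k}^{m-1}(l+di)}\,\frac{(q^la;q^d)_m}{(q^la;q^d)_k}.
\]

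Dividing the two congruences, the full products $(q^la;q^d)_m$ and $(q^d/a;q^d)_m$ do not cancel. So the plan is to take the quotient congruence at $k$ and also at $k=0$, and combine them. The $k=0$ case gives
\[
R_m:=\frac{(q^la;q^d)_m}{(q^d/a;q^d)_m}\equiv a^{2m}q^{-\sum_{i=0}^{m-1}(d-l)}\,\frac{(q^d/a;q^d)_m}{(q^la;q^d)_m},
\]
so $R_m^2\equiv a^{2m}q^{-(d-l)m}$. This determines $R_m$ only up to sign, so squaring is not enough.

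A cleaner route is to match factors between $(q^la;q^d)_{m-k}$ and $(q^la;q^d)_k$ directly. Write $(q^la;q^d)_{m-k}=(q^la;q^d)_k\prod_{j=k}^{m-k-1}(1-aq^{l+dj})$ when $k\le m-k$, and similarly for the denominator, with the reverse quotient when $k>m-k$. The middle blocks are then paired under the reflection $j\mapsto m-1-j$: this reflection sends the numerator block to $\prod_{i=k}^{m-k-1}(-aq^{-d-di})(1-q^{d+di}/a)$, which is exactly the denominator block up to the monomial. The blocks cancel, leaving
\[
\frac{(q^la;q^d)_{m-k}}{(q^d/a;q^d)_{m-k}}\equiv\frac{(q^la;q^d)_k}{(q^d/a;q^d)_k}\,(-a)^{m-2k}\,q^{-\sum_{i=k}^{m-k-1}(d+di)}.
\]
The case $k>m-k$ is symmetric.

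The step I expect to be the main obstacle is the remaining exponent bookkeeping. It requires
\[
-\sum_{i=k}^{m-k-1}d(1+i)= -d(m-2k)(m+1)/2\ \text{(up to care)},
\]
and this has to be reconciled with the stated exponent $m(dm-d+2l)/2+(d-l)k$. The reconciliation again uses $q^{dm}\equiv q^{-l}$: terms like $q^{-dm(m-2k)/2}$ are rewritten with $q^{lm/2}$-type factors and $dmk\equiv -lk$. The parity of $m(dm-d+2l)$ and the case $k>m/2$ also need checking.

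Finally, invertibility of the denominators $(q^d/a;q^d)_j$ modulo $\Phi_n(q)$ holds because $a$ is an indeterminate.
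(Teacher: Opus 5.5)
Your proposal is correct, and the route you settle on differs from the standard one. The paper gives no proof of this lemma and only quotes it from the cited source. The usual argument splits
\[
\frac{(q^la;q^d)_{m-k}}{(q^d/a;q^d)_{m-k}}=\frac{(q^la;q^d)_m}{(q^d/a;q^d)_m}\cdot\frac{(q^{d(m-k+1)}/a;q^d)_k}{(q^{l+d(m-k)}a;q^d)_k}.
\]
It evaluates the first factor as $(-a)^mq^{m(dm-d+2l)/2}$ by rewriting $1-q^{d+dj}/a\equiv 1-q^{d+dj-dm-l}/a$ and reflecting $j\mapsto m-1-j$. The second factor becomes $a^{-2k}q^{(d-l)k}(q^la;q^d)_k/(q^d/a;q^d)_k$ in the same way. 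That uses two reflections but needs no case split, and the exponent appears directly in the stated form.

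You instead peel off the common prefix of length $\min(k,m-k)$ and reflect only the middle block. Its index range $[k,m-k-1]$ (or $[m-k,k-1]$) is invariant under $j\mapsto m-1-j$, so it cancels exactly against the matching denominator block. What remains is the monomial $(-a)^{m-2k}q^{-d(m-2k)(m+1)/2}$; the same formula covers $2k>m$, with the reversed sum entering with a plus sign. The price is the case distinction and a final reduction of the exponent modulo $n$.

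That reduction is one line, but do not do it through pieces like $q^{-dm(m-2k)/2}$ or $q^{lm/2}$. When $m$ and $d$ (or $l$) are odd these have half-integer exponents, and $q^{dm}\equiv q^{-l}$ cannot be applied to them. Both exponents are in fact integers: $m(dm-d+2l)/2=d\binom{m}{2}+lm$, and one of $m-2k$, $m+1$ is even. So there is no parity issue, and directly
\[
\frac{m(dm-d+2l)}{2}+(d-l)k+\frac{d(m-2k)(m+1)}{2}=(m-k)(dm+l)\equiv 0 \pmod n .
\]

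Your first route was not actually stuck either. The numerator congruence at $k=0$ alone already gives $R_m\equiv(-a)^mq^{-dm(m+1)/2}$. It was dividing by the denominator congruence that turned $R_m$ into $R_m^{-1}$ and forced the squaring. Substituting this value of $R_m$ into your general-$k$ quotient yields exactly $(-a)^{m-2k}q^{m(dm-d+2l)/2+(d-l)k}$, with no further reduction needed.
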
	

\begin{lem} 
	Let $n$ be a positive integer statisfying $n\equiv 4 \pmod{5}$, and  let $a$ and $b$ be indeterminates. Then
	\begin{align}
		\begin{aligned}
			\sum_{k=0}^{(2n-3)/5}\frac{(1+q^{10k+3})(q^6/a,q^6/b,abq^6,q^6,q^6;q^{10})_k}{(1+q^3)(aq^{10},bq^{10},q^{10}/(ab),q^{10},q^{10};q^{10})_k}q^{5k}\equiv 0 \pmod{\Phi_n(q)}. \label{le2-3}
		\end{aligned}	
	\end{align}
\end{lem}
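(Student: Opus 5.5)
The plan is to pair the terms $k$ and $m-k$, where $m=(2n-3)/5$, and to show with the preceding Lemma (\cite[Lemma 2.1]{f2}) that they cancel modulo $\Phi_n(q)$. Write $n=5j+4$. Then $m=2j+1$ is an odd integer. We use the Lemma with $d=10$ and $l=6$. Its hypothesis $dm\equiv -l \pmod n$ holds because $10m=4n-6\equiv -6$. We also have $m<n$.

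First, rewrite the $k$-th summand as a product of five ratios of the Lemma's form $(q^{l}x;q^{10})_k/(q^{10}/x;q^{10})_k$. The parameters are $x=1/a$, $x=1/b$, $x=ab$, $x=1$ and $x=1$; the remaining factor is $(1+q^{10k+3})q^{5k}/(1+q^3)$.

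Next, apply the Lemma to each ratio with $k$ replaced by $m-k$. Each ratio picks up a factor
\[
(-x)^{m-2k}\,q^{m(10m+2)/2+4k}.
\]
The product of the five $x$'s is $(1/a)(1/b)(ab)\cdot 1\cdot 1=1$. Hence the combined factor is
\[
(-1)^{5(m-2k)}\,q^{5m(5m+1)+20k}=(-1)^m q^{5m(5m+1)+20k}.
\]
For the extra factor we use $10m+3=4n-3\equiv -3 \pmod n$. This gives $1+q^{10(m-k)+3}\equiv 1+q^{-10k-3}=q^{-10k-3}(1+q^{10k+3})$, and $q^{5(m-k)}=q^{5m}\,q^{5k}\,q^{-10k}$. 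Collecting the powers of $q$, the $k$-dependence of the exponent is $20k-10k-10k=0$. Therefore
\[
T_{m-k}\equiv (-1)^m q^{C}\,T_k \pmod{\Phi_n(q)},\qquad C=5m-3+5m(5m+1).
\]
Substituting $5m=2n-3$ gives $C=2n-6+(2n-3)(2n-2)\equiv -6+6=0 \pmod n$, so $q^C\equiv 1$. Since $m$ is odd, $T_{m-k}\equiv -T_k$. Summing over $k$ gives $S\equiv -S$, so $2S\equiv 0$. As $\Phi_n(q)$ is coprime to $2$, we conclude $S\equiv 0 \pmod{\Phi_n(q)}$.

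The main point to check carefully is that every denominator is coprime to $\Phi_n(q)$ for $0\le k\le m$, so that the congruences make sense in the local ring.

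- The factors involving $a$ and $b$ are harmless because $a,b$ are indeterminates.
- Consider $(q^{10};q^{10})_k$ with $k\le m$. A factor $1-q^{10i}$ is divisible by $\Phi_n(q)$ only if $n\mid 10i$. Since $5\nmid n$, this forces $i$ to be a multiple of $n/\gcd(n,2)\ge n/2$. But $i\le m=(2n-3)/5<n/2$, so this never happens.
- $\Phi_n(q)\mid 1+q^3$ would require $n\mid 6$, and neither $2$ nor $6$ is $\equiv 4 \pmod 5$.

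The Lemma is likewise applied within this range of $k$. I expect the sign and exponent bookkeeping above to be the only delicate step; the rest is routine.
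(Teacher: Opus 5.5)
Your proof is correct and follows the same route as the paper. You pair the $k$-th and $(m-k)$-th summands with $m=(2n-3)/5$ (the paper's ``$m=(2n-1)/5$'' is a typo), apply the cited Lemma with $d=10$, $l=6$ to the five ratios, and conclude that the paired terms cancel modulo $\Phi_n(q)$. You also supply details the paper leaves implicit: the residual power $q^{C}$ satisfies $n\mid C$, the sign is $(-1)^m=-1$ because $m$ is odd, and every denominator is coprime to $\Phi_n(q)$.
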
	
\pf. By setting $d=10$, $l=6$ and $m=(2n-1)/5$ in the $q$-congruences outlined in Lemma $2.2$, for $0\leq k \leq (2n-3)/5$, we obtain
\begin{align*}
	\begin{aligned}
	\frac{(aq^6;q^{10})_{(2n-3)/5-k}}{(q^{10}/a;q^{10})_{(2n-3)/5-k}}  \equiv (-a)^{(2n-3)/5-2k}\frac{(aq^6;q^{10})_k}{(q^{10}/a;q^{10})_k}q^{(2n-2)(2n-3)/5+4k} \pmod{\Phi_n(q)}.
	\end{aligned}	
\end{align*}
Applying the above $q$-congruence five times and using the fact that $q^n\equiv 1\pmod{\Phi_n(q)}$, we deduce that
\begin{align*}
	\begin{aligned}
		&\frac{(1+q^{10((2n-3)/5-k)+3})(q^6/a,q^6/b,abq^6,q^6,q^6;q^{10})_{(2n-3)/5-k}}{(1+q^3)(aq^{10},bq^{10},q^{10}/(ab),q^{10},q^{10})_{(2n-3)/5}}q^{5((2n-3)/5-k)}\\
		& \hspace{2em} \equiv - \frac{(1+q^{10k+3})(q^6/a,q^6/b,abq^6,q^6,q^6;q^{10})_{k}}{(1+q^3)(aq^{10},bq^{10},q^{10}/(ab),q^{10},q^{10})_k}q^{5k} \pmod{\Phi_n(q)}.
	\end{aligned}	
\end{align*}
Observe that, modulo $\Phi_n(q)$, the $k$-th and $(2n-3)/5-k$-th summands on the left-hand side of \eqref{le2-3} cancel each other for $0\leq k\leq (2n-3)/5$. Consequently, we arrive at \eqref{le2-3}. \qed

\begin{lem}
	Let $n$ be an integer with $n \equiv 4 \pmod{5}$, and let $a$, $b$ be indeterminates. Then modulo $\Phi_n(q)(1-abq^{4n})(a-q^{4n})(b-q^{4n})$,
	\begin{align}
		\begin{aligned}
			\sum_{k=0}^{(2n-3)/5}&\frac{(1+q^{10k+3})(q^6/a,q^6/b,abq^6,q^6,q^6;q^{10})_k}{(1+q^3)(aq^{10},bq^{10},q^{10}/(ab),q^{10},q^{10};q^{10})_k}q^{5k}\\
			&\equiv \{\frac{(b-q^{4n})(ab^2-a^2b-1+abq^{4n})}{(a-b)(1-ab^2)}\frac{(q^{16},q^{4-4n};q^{10})_{(2n-3)/5}}{(q^{10}b,q^{10-4n}/b;q^{10})_{(2n-3)/5}}\\
			&+\frac{(a-q^{4n})(1-abq^{4n})}{(a-b)(1-ab^2)}\frac{(q^{16},q^{4-4n};q^{10})_{(2n-3)/5}}{(q^{10}a,q^{10-4n}/a;q^{10})_{(2n-3)/5}}\}\\
			&\times\sum_{k=0}^{(2n-3)/5}\frac{(q^{7},q^{6}/a,q^6/b,q^6ab;q^{10})_k}{(q^{10},q^{10},q^{12},q^{13};q^{10})_k}q^{10k}.  \label{lm-2}
		\end{aligned}	
	\end{align}
\end{lem}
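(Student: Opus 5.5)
Throughout, let $N=(2n-3)/5$. The plan is to copy the proof of Lemma 3.1 with $2n$ replaced by $4n$, and then bring in the extra modulus $\Phi_n(q)$ through Lemma 3.3.

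The first step is the modulus $(1-abq^{4n})(a-q^{4n})$. For $a=q^{4n}$ or $a=q^{-4n}/b$ we have $q^6/a=q^{6-4n}=q^{10\cdot(-N)}$, since $10N=4n-6$. So the left-hand side of \eqref{lm-2} terminates naturally at $k=N$. It is then the terminating very-well-poised ${}_8\phi_7$ series in base $q^{10}$ with parameters
\[
a\to q^6,\quad b\to q^3,\quad c\to q^6,\quad d\to q^6/b,\quad e\to bq^{6+4n},\quad q^{-n}\to q^{6-4n}.
\]
The argument is $q^{12}\cdot q^{10N+10}/(q^{15}\cdot q^{12+4n})=q^5$, which matches the factor $q^{5k}$. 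The factor $(1+q^{10k+3})/(1+q^3)$ is exactly the ratio $(q^{13},-q^{13};q^{10})_k/(q^3,-q^3;q^{10})_k$ produced by $qa^{1/2}$, $-qa^{1/2}$ in base $q^{10}$.

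Applying Watson's transformation \eqref{ll-0} gives the prefactor $(q^{16},q^{4-4n};q^{10})_N/(q^{10}b,q^{10-4n}/b;q^{10})_N$ times a terminating ${}_4\phi_3$. Its top parameters are $q^{7},q^6/b,bq^{6+4n},q^{6-4n}$, and its bottom parameters are $q^{13},q^{10}/b$-type terms together with $deq^{-n}/a=q^{12}$. After reordering, these are precisely the parameters $(q^7,q^6/a,q^6/b,abq^6)$ over $(q^{10},q^{10},q^{12},q^{13})$ under the specialization. Since $a-q^{4n}$ and $1-abq^{4n}$ are coprime, this yields the congruence modulo $(1-abq^{4n})(a-q^{4n})$, as in \eqref{ll-3}. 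The symmetry of the left side in $a,b$ then gives the analogue of \eqref{ll-4} modulo $b-q^{4n}$. The idempotent identities \eqref{ll-5}, \eqref{ll-6} with $q^{2n}\to q^{4n}$ are checked directly, and the Chinese remainder theorem gives \eqref{lm-2} modulo $(1-abq^{4n})(a-q^{4n})(b-q^{4n})$.

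The second step handles the extra factor $\Phi_n(q)$. It is coprime to the other three factors, because $a,b$ are indeterminates. By Lemma 3.3, the left side vanishes modulo $\Phi_n(q)$, so it suffices to show that the right side also vanishes modulo $\Phi_n(q)$. Both prefactors contain $(q^{4-4n};q^{10})_N$. This product runs over $1-q^{4-4n+10j}$ for $0\le j\le N-1$, and the exponent $4-4n+10j$ equals $-2n$ at $j=(n-2)/5$, which is an integer since $n\equiv 4\pmod 5$, and $(n-2)/5\le N-1$. Hence $(q^{4-4n};q^{10})_N$ contains the factor $1-q^{-2n}$, which is divisible by $\Phi_n(q)$.

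Next, the denominators $(q^{10}b,q^{10-4n}/b;q^{10})_N$, $(a-b)(1-ab^2)$ and those in the ${}_4\phi_3$ must be coprime to $\Phi_n(q)$. The factors involving $a,b$ are fine because $a,b$ are indeterminates. The ${}_4\phi_3$ denominators are $(q^{10};q^{10})_k$, $(q^{12};q^{10})_k$ and $(q^{13};q^{10})_k$ with $k\le N<n/2$. One has to check that no exponent $10j$, $10j+2$ or $10j+3$ with $j<N$ is a multiple of $n$. This is a small residue computation mod $n$ using $n\equiv4\pmod5$. For instance, $10j+2=2n$ would force $j=(n-1)/5$, which is not an integer; the other cases are analogous.

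With these checks, the right side is $\equiv 0$ modulo $\Phi_n(q)$. A final application of the Chinese remainder theorem combines the two congruences into one modulo the full product.

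I expect the main obstacle to be this denominator bookkeeping. One must make sure that the $\Phi_n(q)$ factor in the numerator is not cancelled by any denominator, in particular $(q^{12};q^{10})_k$ and $(q^{13};q^{10})_k$ in the ${}_4\phi_3$. One must also verify Watson's parameter matching exactly, including the $q^{12}$ bottom parameter coming from $deq^{-n}/a$.
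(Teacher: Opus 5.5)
Your architecture matches the paper's. It uses Watson's transformation at $a=q^{4n}$ and $a=q^{-4n}/b$, symmetry in $a,b$, the idempotents \eqref{ll-5}--\eqref{ll-6} with $q^{2n}\mapsto q^{4n}$, the vanishing \eqref{le2-3} of the left side, and coprimality of $\Phi_n(q)$ with the other three factors. The gap is in showing that the right side vanishes modulo $\Phi_n(q)$.

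You claim $(q^{4-4n};q^{10})_N$ contains $1-q^{-2n}$ at $j=(n-2)/5$. But $n\equiv 4\pmod 5$ gives $n-2\equiv 2\pmod 5$, so this $j$ is never an integer. Worse, for odd $n$ (i.e. $n\equiv 9\pmod{10}$, which includes every prime $p\equiv 4\pmod 5$), no factor of $(q^{4-4n};q^{10})_N$ is divisible by $\Phi_n(q)$. The exponents run over $4-4n,14-4n,\dots,-12$. Hitting $-n,-2n,-3n$ would require $10\mid 3n-4$, $10\mid 2n-4$ or $10\mid n-4$, and none of these holds. For $n=9$ the product is $(1-q^{-32})(1-q^{-22})(1-q^{-12})$, which is coprime to $\Phi_9(q)$. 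So your argument does not establish that the right side vanishes. The fix is the paper's: use the other common numerator factor $(q^{16};q^{10})_N$, whose last factor is $1-q^{16+10(N-1)}=1-q^{4n}$.

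Your denominator check also fails, in the other parity. If $n\equiv 4\pmod{10}$, then $3n\equiv 2\pmod{10}$ and $10j+2=3n$ for $j=(3n-2)/10\le N$. So $(q^{12};q^{10})_k$ contains $1-q^{3n}$ for large $k$; for example, with $n=4$, $k=1$, the factor $1-q^{12}$ is divisible by $\Phi_4(q)=1+q^2$. In that case the vanishing holds only because the prefactor carries two factors of $\Phi_n(q)$: $1-q^{4n}$ from $(q^{16};q^{10})_N$, and $1-q^{-3n}$ from $(q^{4-4n};q^{10})_N$ at $j=(n-4)/10$.

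For odd $n$ your coprimality claim is correct, since the residues of $n,2n,3n$ modulo $10$ are $9,8,7$ and avoid $0,2,3$. There $1-q^{4n}$ alone suffices, and the paper's one-line argument via $(q^{16};q^{10})_N$ tacitly relies on this coprimality.

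Two minor slips:
1. Your check of Watson's argument should read $q^{12}q^{10(N+2)}/(q^{3}q^{6}q^{12+4n})=q^{5}$; what you wrote equals $q^{-11}$.
2. The bottom parameter $aq/c$ is $q^{10}$, not a ``$q^{10}/b$-type'' term.
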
	

\pf.  Set $a=q^{4n}$ or $a=q^{-4n}/b$ and apply \eqref{ll-0}. Then we have
\begin{align*}
	\begin{aligned}
		\sum_{k=0}^{(2n-3)/5}&\frac{(1+q^{10k+3})(bq^{6+2n},q^{6-2n},q^{6}/b,q^6,q^6;q^{10})_k}{(1+q^3)(q^{10+2n},a^{10-2n}/b,q^{10}b,q^{10},q^{10};q^{10})_k}q^{10k}\\
		&=
		{}_{8}\phi_{7}
		\left[
		\begin{array}{*{7}{c} c}
			q^6, & q^{13}, & -q^{13}, & q^3, & q^6, & q^6/b, & bq^{6+4n}, & q^{6-4n} \\
			&q^{3}, & -q^{3}, & q^{13}, & q^{10}, & q^{10}b, & q^{10-4n}/b, & q^{10+4n}
		\end{array}
		;\ q^{10},\ q^5
		\right]\\
		&=\frac{(q^{16},q^{4-4n};q^{10})_{(2n-3)/5}}{(q^{10}b,q^{10-4n}/b;q^{10})_{(2n-3)/5}}\sum_{k=0}^{(2n-3)/5}\frac{(q^{6}/a,abq^{6},q^{6}/b,q^6;q^{10})_k}{(q^{10},q^{10},q^{12},q^{13})_k}q^{10k}.
	\end{aligned}	
\end{align*}
Clearly, the polynomials $(1-abq^{4n})(a-q^{4n})$ and $(b-q^{4n})$ are coprime polynomials in $q$. Utilizing the relations \eqref{ll-5} and \eqref{ll-6} with $q^{2n}\mapsto q^{4n}$, together with the above identity and the Chinese remainder theorem for polynomials, we deduce that \eqref{lm-2} holds modulo $(1-abq^{4n})(a-q^{4n})(b-q^{4n})$. Meanwhile, note that $(q^{16};q^{10})_{(2n-3)/3}\equiv 0\pmod{\Phi_n(q)}$. It follows that the right-hand of \eqref{lm-2} vanishes modulo $\Phi_n(q)$. In view of \eqref{le2-3},  \eqref{lm-2}  is valid  modulo $\Phi_n(q)$ as well. Furthermore, since the polynomials $(1-abq^{4n})(a-q^{4n})(b-q^{4n})$ and $\Phi_n(q)$ are pairwise relatively prime, this completes the proof of \eqref{lm-2}. \qed

\section{Proofs of Theorems}
	
\textbf{Proof of Theorem 2.1}: As $1-q^{2n}$ contains $\Phi_n(q)$, set $b=1$ in \eqref{ll-1}, we then get the $q$-congruence below: modulo $\Phi_n(q)(1-aq^{2n})(a-q^{2n})$,
	\begin{align}
	\begin{aligned}
		\sum_{k=0}^{(n-3)/5}&\frac{(1+q^{10k+3})(q^6/a,q^6,aq^6,q^6,q^6;q^{10})_k}{(1+q^3)(aq^{10},q^{10},q^{10}/a,q^{10},q^{10};q^{10})_k}q^{5k}\\
		&\equiv \{\frac{(1-q^{2n})(1-a+a^2-aq^{2n})}{(1-a)^2}\frac{(q^{16},q^{4-2n};q^{10})_{(n-3)/5}}{(q^{10},q^{10-2n};q^{10})_{(n-3)/5}}\\
		&-\frac{(a-q^{2n})(1-aq^{2n})}{(1-a)^2}\frac{(q^{16},q^{4-2n};q^{10})_{(n-3)/5}}{(q^{10}a,q^{10-2n}/a;q^{10})_{(n-3)/5}}\}\\
		&\times\sum_{k=0}^{(n-3)/5}\frac{(q^{7},q^{6}/a,q^6,q^6a;q^{10})_k}{(q^{10},q^{10},q^{12},q^{13};q^{10})_k}q^{10k}\\
		&=\frac{(q^{16},q^{4-2n};q^{10})_{(n-3)/5}}{(q^{10},q^{10-2n};q^{10})_{(n-3)/5}}\sum_{k=0}^{(n-3)/5}\frac{(q^{7},q^{6}/a,q^6,q^6a;q^{10})_k}{(q^{10},q^{10},q^{12},q^{13};q^{10})_k}q^{10k}\\
		&+\frac{(1-aq^{2n})(a-q^{2n})}{(1-a)^2}\sum_{k=0}^{(n-3)/5}\frac{(q^{7},q^{6}/a,q^6,q^6a;q^{10})_k}{(q^{10},q^{10},q^{12},q^{13};q^{10})_k}q^{10k}\\
		&\times\{\frac{(q^{16},q^{4-2n};q^{10})_{(n-3)/5}}{(q^{10},q^{10-2n};q^{10})_{(n-3)/5}}-\frac{(q^{16},q^{4-2n};q^{10})_{(n-3)/5}}{(q^{10}a,q^{10-2n}/a;q^{10})_{(n-3)/5}}\} \label{ll-5-0}
	\end{aligned}	
\end{align}
It  is clear that
\begin{align}
	\begin{aligned}
		&\frac{(q^{16},q^{4-2n};q^{10})_{(n-3)/5}}{(q^{10},q^{10-2n};q^{10})_{(n-3)/5}}-\frac{(q^{16},q^{4-2n};q^{10})_{(n-3)/5}}{(q^{10}a,q^{10-2n}/a;q^{10})_{(n-3)/5}}\\
		&\hspace{2em}\equiv \frac{(q^{16},q^{4};q^{10})_{(n-3)/5}}{(q^{10},q^{10};q^{10})_{(n-3)/5}}-\frac{(q^{16},q^{4};q^{10})_{(n-3)/5}}{(q^{10}a,q^{10}/a;q^{10})_{(n-3)/5}} \pmod{\Phi_n(q)}. \label{ll-6-0}
	\end{aligned}	
\end{align}
By the L'H\^opital rule, there holds
\begin{align}
	\begin{aligned}
		\lim\limits_{a\to 1} &\frac{(1-aq^{2n})(a-q^{2n})}{(1-a)^2}\{\frac{(q^{16},q^{4};q^{10})_{(n-3)/5}}{(q^{10},q^{10};q^{10})_{(n-3)/5}}-\frac{(q^{16},q^{4};q^{10})_{(n-3)/5}}{(q^{10}a,q^{10}/a;q^{10})_{(n-3)/5}}\}\\
		&=-(1-q^{2n})^2\frac{(q^{16},q^4;q^{10})_{(n-3)/5}}{(q^{10},q^{10};q^{10})_{(n-3)/5}}\sum_{j=1}^{(n-3)/5}\frac{q^{10j}}{(1-q^{10j})^2}. \label{ll-7}
	\end{aligned}	
\end{align}
Meanwhile, we have
\begin{align}
	\begin{aligned}
	\frac{(q^{4};q^{10})_{(n-3)/5}}{(q^{10};q^{10})_{(n-3)/5}}&\equiv	\frac{(q^{4-2n};q^{10})_{(n-3)/5}}{(q^{10-2n};q^{10})_{(n-3)/5}} \pmod{\Phi_n(q)}\\
		&=q^{(3-n)6/5}\frac{(q^{10};q^{10})_{(n-3)/5}}{(q^6;q^{10})_{(n-3)/5}}. \label{ll-8}
	\end{aligned}	
\end{align}
Thus, combining \eqref{ll-5-0}--\eqref{ll-8}, we arrive at the desired consequence \eqref{th-02}.\\

\noindent\textbf{Proof of Theorem 2.3}: Letting $b=1$ in \eqref{lm-2}, we acquire the following $q$-congruence: modulo $\Phi_n(q)(1-aq^{4n})(a-q^{4n})(1-q^{4n})$,
\begin{align}
	\begin{aligned}
		\sum_{k=0}^{(2n-3)/5}&\frac{(1+q^{10k+3})(q^6/a,q^6,aq^6,q^6,q^6;q^{10})_k}{(1+q^3)(aq^{10},q^{10},q^{10}/a,q^{10},q^{10};q^{10})_k}q^{5k}\\
		&\equiv \frac{(q^{16},q^{4-4n};q^{10})_{(2n-3)/5}}{(q^{10},q^{10-4n};q^{10})_{(2n-3)/5}}\sum_{k=0}^{(2n-3)/5}\frac{(q^{7},q^{6}/a,q^6,q^6a;q^{10})_k}{(q^{10},q^{10},q^{12},q^{13};q^{10})_k}q^{10k}\\
		 &+\frac{(a-q^{4n})(1-aq^{4n})}{(1-a)^2}\sum_{k=0}^{(2n-3)/5}\frac{(q^{7},q^{6}/a,q^6/b,q^6ab;q^{10})_k}{(q^{10},q^{10},q^{12},q^{13};q^{10})_k}q^{10k}\\
		 &\times\{\frac{(q^{16},q^{4-4n};q^{10})_{(2n-3)/5}}{(q^{10},q^{10-4n};q^{10})_{(2n-3)/5}}
		+\frac{(q^{16},q^{4-4n};q^{10})_{(2n-3)/5}}{(q^{10}a,q^{10-4n}/a;q^{10})_{(2n-3)/5}}\}. \label{th-2-1}
	\end{aligned}	
\end{align}	
	It is easy to see that	
\begin{align}
	\begin{aligned}
	&\frac{(q^{16},q^{4-4n};q^{10})_{(2n-3)/5}}{(q^{10},q^{10-4n};q^{10})_{(2n-3)/5}}
		+\frac{(q^{16},q^{4-4n};q^{10})_{(2n-3)/5}}{(q^{10}a,q^{10-4n}/a;q^{10})_{(2n-3)/5}}\\
		&\hspace{1em} \equiv \frac{(q^{16},q^4;q^{10})_{(2n-3)/5}}{(q^{10},q^{10};q^{10})_{(2n-3)/5}}
		+\frac{(q^{16},q^4;q^{10})_{(2n-3)/5}}{(q^{10}a,q^{10}/a;q^{10})_{(2n-3)/5}} \pmod{\Phi_n(q)}.\label{th-2-2}
	\end{aligned}	
\end{align}		
By 	the L'H\^opital rule, we obtain
\begin{align*}
	\begin{aligned}
		\lim\limits_{a\to 1} &\frac{(1-aq^{4n})(a-q^{4n})}{(1-a)^2}\{\frac{(q^{16},q^{4};q^{10})_{(2n-3)/5}}{(q^{10},q^{10};q^{10})_{(2n-3)/5}}-\frac{(q^{16},q^{4};q^{10})_{(2n-3)/5}}{(q^{10}a,q^{10}/a;q^{10})_{(2n-3)/5}}\}\\
		&=-(1-q^{4n})^2\frac{(q^{16},q^4;q^{10})_{(2n-3)/5}}{(q^{10},q^{10};q^{10})_{(2n-3)/5}}\sum_{j=1}^{(2n-3)/5}\frac{q^{10j}}{(1-q^{10j})^2}. 
	\end{aligned}	
\end{align*}	
Therefore, substituting \eqref{th-2-2} into \eqref{th-2-1}, taking limits as $a\rightarrow 1$, and combining the fact that	
\begin{align*}
	\begin{aligned}
		\frac{(q^4;q^{10})_{(2n-3)/5}}{(q^{10};q^{10})_{(2n-3)/5}}&\equiv \frac{(q^{4-4n};q^{10})_{(2n-3)/5}}{(q^{10-4n};q^{10})_{(2n-3)/5}} \pmod{\Phi_n(q)}\\
		& =q^{\frac{6(3-2n)}{5}}\frac{(q^{12};q^{10})_{(2n-3)/5}}{(q^{6};q^{10})_{(2n-3)/5}}.
	\end{aligned}	
\end{align*}		
we finish the proof of \eqref{th-02}.\\

Since the proofs of Theorem $2.5$ and $2.7$ are similar to those of Theorem $2.1$ and $2.3$, respectively, we only present their sketches of their proofs.\\

\noindent\textbf{Sketch of proof Theorem 2.5}: The proof is analogous to that of Theorem $2.1$. This time, We have a parametric version of \eqref{th-02}: modulo $(1-abq^{2n})(a-q^{2n})(b-q^{2n})$,
\begin{align}
	\begin{aligned}
		\sum_{k=0}^{(n-1)/5}&\frac{(1+q^{10k+1})(q^2/a,q^2/b,abq^2,q^2,q^2;q^{10})_k}{(1+q)(aq^{10},bq^{10},q^{10}/(ab),q^{10},q^{10};q^{10})_k}q^{15k}\\
		&\equiv \{\frac{(b-q^{2n})(ab^2-a^2b-1+abq^{2n})}{(a-b)(1-ab^2)}\frac{(q^{12},q^{8-2n};q^{10})_{(n-1)/5}}{(q^{10}b,q^{10-2n}/b;q^{10})_{(n-1)/5}}\\
		&+\frac{(a-q^{2n})(1-abq^{2n})}{(a-b)(1-ab^2)}\frac{(q^{12},q^{8-2n};q^{10})_{(n-1)/5}}{(q^{10}a,q^{10-2n}/a;q^{10})_{(n-1)/5}}\}\\
		&\times\sum_{k=0}^{(n-1)/5}\frac{(q^{9},q^{2}/a,q^2/b,q^2ab;q^{10})_k}{(q^{4},q^{10},q^{10},q^{11};q^{10})_k}q^{10k}.  \label{k1}
	\end{aligned}	
\end{align}
Upon setting $b=1$, it follows from \eqref{k1} that modulo $(1-aq^{2n})(a-q^{2n})(1-q^{2n})$
\begin{align}
	\begin{aligned}
		\sum_{k=0}^{(n-1)/5}&\frac{(1+q^{10k+1})(q^2/a,q^2/b,abq^2,q^2,q^2;q^{10})_k}{(1+q)(aq^{10},bq^{10},q^{10}/(ab),q^{10},q^{10};q^{10})_k}q^{15k}\\
		&\equiv \frac{(q^{12},q^{8-2n};q^{10})_{(n-1)/5}}{(q^{10},q^{10-2n};q^{10})_{(n-1)/5}}\sum_{k=0}^{(n-1)/5}\frac{(q^{9},q^{2}/a,q^2,q^2a;q^{10})_k}{(q^{4},q^{10},q^{10},q^{11};q^{10})_k}q^{10k}\\
		&+\frac{(1-aq^{2n})(a-q^{2n})}{(1-a)^2}\sum_{k=0}^{(n-1)/5}\frac{(q^{9},q^{2}/a,q^2/b,q^2ab;q^{10})_k}{(q^{4},q^{10},q^{10},q^{11};q^{10})_k}q^{10k}\\
		&\times\{\frac{(q^{12},q^{8-2n};q^{10})_{(n-1)/5}}{(q^{10},q^{10-2n};q^{10})_{(n-1)/5}}-
		\frac{(q^{12},q^{8-2n};q^{10})_{(n-1)/5}}{(q^{10}a,q^{10-2n}/a;q^{10})_{(n-1)/5}}\}. \label{ll-2-5} 
	\end{aligned}	
\end{align}
Then taking the limit as $a\rightarrow 1$ in \eqref{ll-2-5}, we arrive at \eqref{th-02}.\\

\noindent\textbf{Sketch of proof Theorem 2.7}: The argument for Theorem $2.7$ follows the same lines as  Theorem $2.3$.  We now state the corresponding congruence: Let $n$ be an integer with $n \equiv 3 \pmod{5}$, and let $a$, $b$ be indeterminates. Then modulo $\Phi_n(q)(1-abq^{4n})(a-q^{4n})(b-q^{4n})$,
	\begin{align}
		\begin{aligned}
			\sum_{k=0}^{(2n-1)/5}&\frac{(1+q^{10k+1})(q^2/a,q^2/b,abq^2,q^2,q^2;q^{10})_k}{(1+q)(aq^{10},bq^{10},q^{10}/(ab),q^{10},q^{10};q^{10})_k}q^{15k}\\
			&\equiv \{\frac{(b-q^{4n})(ab^2-a^2b-1+abq^{4n})}{(a-b)(1-ab^2)}\frac{(q^{12},q^{8-4n};q^{10})_{(2n-1)/5}}{(q^{10}b,q^{10-4n}/b;q^{10})_{(2n-1)/5}}\\
			&+\frac{(a-q^{4n})(1-abq^{4n})}{(a-b)(1-ab^2)}\frac{(q^{12},q^{8-4n};q^{10})_{(2n-1)/5}}{(q^{10}a,q^{10-4n}/a;q^{10})_{(2n-1)/5}}\}\\
			&\times\sum_{k=0}^{(2n-1)/5}\frac{(q^{9},q^{2}/a,q^2/b,q^2ab;q^{10})_k}{(q^{4},q^{10},q^{10},q^{11};q^{10})_k}q^{10k}.   \label{k2}
		\end{aligned}	
	\end{align}
Upon first setting $b=1$ and then letting $a\rightarrow 1$ in \eqref{k2}, we conclude that the   $q$-congruence \eqref{th-03} is valid.

\section*{Declarations}

\begin{flushleft}
	\textbf{Conflicts of Interest:}	The author declares that he has no conflict of interest.\\[13pt]
	
	\textbf{Data Availability Statement:} Not applicable.
\end{flushleft}

\end{document}